\newcommand{\scr}[1]{\mathscr{#1}}
\newcommand{\bQ}{\mathbb{Q}}
\newcommand{\N}{\mathbb{N}}
\renewcommand{\a}{\mathfrak{a}}
\newcommand{\m}{\mathfrak{m}}
\newcommand{\I}{\mathcal{I}}
\renewcommand{\O}{\mathcal{O}}
\newcommand{\tensor}{\otimes}   % tensor product
\DeclareMathOperator{\Spec}{{Spec}}
\newcommand{\tld}{\widetilde }
\newcommand{\sI}{\scr{I}}
\newcommand{\sL}{\scr{L}}
\newcommand{\sM}{\scr{M}}
\newcommand{\sP}{\scr{P}}
\newcommand{\sQ}{\scr{Q}}
\newcommand{\sR}{\scr{R}}
\theoremstyle{plain}
\newtheorem*{theorem*}{Theorem}
\newtheorem{theorem}{Theorem}[section]
\newtheorem{proposition}[theorem]{Proposition}
\newtheorem{lemma}[theorem]{Lemma}
\newtheorem{corollary}[theorem]{Corollary}
\theoremstyle{remark}
\newtheorem{remark}[theorem]{Remark}
\theoremstyle{definition}
\newtheorem{definition}[theorem]{Definition}
\newtheorem{notation}[theorem]{Notation}
\newtheorem*{question*}{Question}
\newtheorem{example}[theorem]{Example}
\newtheorem*{example*}{Example}
\begin{document}
\numberwithin{equation}{theorem}

\title{On the number of compatibly Frobenius split subvarieties, prime $F$-ideals, and log canonical centers}
\author{Karl Schwede}
\address{Department of Mathematics, University of Michigan, Ann Arbor, Michigan  48109}
\email{kschwede@umich.edu}
\author{Kevin Tucker}
\address{Department of Mathematics, University of Michigan, Ann Arbor, Michigan  48109}
\email{kevtuck@umich.edu}
\subjclass[2000]{13A35, 14B05, 14J17}
\thanks{The first author was partially supported as an NSF postdoc.  The second author was partially supported by the NSF under grant DMS-0502170.}
\begin{abstract}
Let $X$ be a projective Frobenius split variety over an algebraically closed field with splitting $\theta : F_* \O_X \rightarrow \O_X$.  In this paper we give a sharp bound on the number of subvarieties of $X$ compatibly split by $\theta$.  In particular, suppose $\sL$ is a sufficiently ample line bundle on $X$ (for example, if $\sL$ induces a projectively normal embedding) with $n = \dim H^0(X, \sL)$. We show that the number of $d$-dimensional irreducible subvarieties of $X$ that are compatibly split by $\theta$ is less than or equal to ${n \choose d+1}$.  This generalizes a well known result on the number of closed points compatibly split by a fixed splitting $\theta$.  Similarly, we give a bound on the number of prime $F$-ideals of an $F$-finite $F$-pure local ring.

Compatibly split subvarieties are closely related to log canonical centers.  Our methods apply in any characteristic, and so we are also able to bound the possible number of log canonical centers of a log canonical pair $(X, \Delta)$ passing through a closed point $x \in X$.  Specifically, if $n$ is the embedding dimension of $X$ at $x$, then the number of $d$-dimensional log canonical centers of $(X, \Delta)$ through $x$ is less than or equal to ${n \choose d}$.
\end{abstract}

\maketitle
%\tableofcontents

\spacing{1.3}

\section{Introduction}
%Collections of prime ideals whose corresponding subschemes satisfy strong tranversality conditions appear naturally in several contexts of commutative algebra and algebraic geometry.  In this paper, we study collections which
In this paper, we give a sharp bound on the size of a collection of prime ideals whose corresponding subschemes satisfy certain strong transversality conditions.  Such collections appear naturally in several contexts in both commutative algebra and algebraic geometry.  We will focus on the following three instances, which are all quite closely related (see \cite{SchwedeCentersOfFPurity} and \cite{SchwedeFAdjunction}):
\begin{itemize}
\item{}  The ideals of compatibly split subvarieties which appear in the study of Frobenius split varieties.  See, for example, \cite{MehtaRamanathanFrobeniusSplittingAndCohomologyVanishing} and \cite{BrionKumarFrobeniusSplitting}.
\item{}  The ideals of log canonical centers of a log canonical pair $(X, \Delta)$ which appear in the minimal model program and its applications.  See, for example, \cite{KawamataFujita}, \cite{FujinoIndicesOfLogCanonical} and \cite{AmbroQuasiLogVarieties}.
\item{}  The prime annihilators of $F$-stable submodules of $H^{\dim R}_{\m}(R)$, when $(R, \m)$ is an $F$-finite $F$-pure local ring, form a subset of such a collection.  These objects have been studied in several contexts, including tight closure theory.  See, for example, \cite{SmithTestIdeals}, \cite{EnescuHochsterTheFrobeniusStructureOfLocalCohomology}, \cite{SharpGradedAnnihilatorsOfModulesOverTheFrobeniusSkewPolynomialRing} and \cite{HartshorneSpeiserLocalCohomologyInCharacteristicP}.
\end{itemize}

Our main technical result, which we apply to each of the three contexts above, is the following:
\vskip 6pt
\hskip -12pt
{\bf Theorem \ref{ThmTechnicalResult}.}
{\it Let $\sQ$ be a collection of prime ideals in an excellent local ring $(R, \m)$ of embedding dimension $n$.  Suppose that the set of ideals
\[
\sI = \left\{ \left. \quad \bigcap\limits_{Q \in \alpha} Q \quad \right|  \quad \alpha \mbox{ is a finite subset of } \sQ  \quad \right\}
\]
is closed under sums.  Then the number of primes $Q \in \sQ$ such that $R/Q$ has dimension $d$ is less than or equal to ${n \choose d}$.}
\vskip 6pt
\hskip -12pt The above numerical bound also holds for certain collections of ideals whose elements are not necessarily prime.  This generalization allows us to reduce to the case where $R$ is a complete regular local ring.
%(which allows us to reduce to the case where $R$ is a complete regular local ring).

For projective varieties in positive characteristic, the existence of a Frobenius splitting has strong geometric and cohomological implications.  This observation has been particularly useful in answering numerous questions arising in representation theory, \cite{BrionKumarFrobeniusSplitting}.  Compatibly split subvarieties form a key part of this theory.  Recently, there have been several results on the finiteness of compatibly Frobenius split subvarieties; see \cite[Theorem 5.8]{SchwedeFAdjunction} and \cite{KumarMehtaFiniteness}.
We use Theorem \ref{ThmTechnicalResult} to prove the following result which gives a sharp bound on the number of compatibly split subvarieties:
\vskip 6pt
\hskip -12pt
{\bf Theorem \ref{ThmCompatiblySplitProjective}.}
{\it Suppose that $X$ is a projective variety over an $F$-finite field $k$ of characteristic $p > 0$.  Further, suppose that $\sL$ is an ample line bundle on $X$ with associated section ring $S = \oplus_{n \geq 0} H^0(X, \sL^{n})$.  Let $n$ be the embedding dimension of $S$ at the irrelevant ideal $S_+$.  If $\theta : F^e_* \O_X \rightarrow \O_X$ is a splitting of the $e$-iterated Frobenius, then there are at most ${n \choose d+1}$ irreducible $d$-dimensional subschemes of $X$ which are compatibly split with $\theta$.
}
\vskip 6pt
\hskip -12pt
Note that, if $k$ is an algebraically closed field and $\sL$ is a very ample line bundle which induces a projectively normal embedding, then $n = \dim H^0(X, \sL)$; see Remark \ref{RemEmbDimOfProjNormal}.
This result also generalizes a well known bound on the number of closed points compatibly split by a fixed Frobenius splitting.  Specifically, in the notation of Theorem \ref{ThmCompatiblySplitProjective}, suppose that $Y$ is the reduced scheme corresponding to a union of $\theta$-compatibly split closed points in $X$.  Since $H^{0}(X, \sL) \to H^{0}(Y, \sL)$ is surjective by
\cite[Theorem 1.2.8 (ii)]{BrionKumarFrobeniusSplitting}, there can be at most $\dim H^{0}(X,\sL)$ distinct points in $Y$. For any ample line bundle $\sL$ on $X$ with $n = \dim H^0(X, \sL)$, it would be interesting to know if the number of $\theta$-compatibly split subvarieties of dimension $d$ is at most ${n \choose d+1}$.

Next, recall that an $F$-ideal of a local ring $(R,\m,k)$ of positive characteristic is an annihilator of an $F$-stable submodule of $H^{\dim R}_{\m}(R)$.  From the point of view of tight closure theory, $F$-stable submodules of $H^{\dim R}_{\m}(R)$ and their annihilators are natural objects of study; see \cite{SmithTestIdeals}.  For example, if $R$ is normal, local and Gorenstein, then the largest proper $F$-stable submodule of $H^{\dim R}_{\m}(R)$ is the (finitistic) tight closure of 0, and its annihilator is the test ideal of $R$.  There have been several recent papers studying when the set of $F$-ideals is finite; see for example \cite{SharpGradedAnnihilatorsOfModulesOverTheFrobeniusSkewPolynomialRing} and \cite{EnescuHochsterTheFrobeniusStructureOfLocalCohomology}.  We use Theorem \ref{ThmTechnicalResult} to deduce the following partial generalization of these recent results:
\vskip 6pt
\hskip -12pt
{\bf Theorem \ref{ThmBoundOnFIdeals}.}
{\it Suppose that $(R, \m)$ is an $F$-finite local ring of embedding dimension $n$.  Further, suppose that $\Spec R$ is $F$-pure.  Then the set of prime $d$-dimensional $F$-ideals of $R$ is less than or equal to ${n \choose d}$.}
\vskip 6pt
\hskip -12pt Since every $F$-ideal in an $F$-pure ring can be written as an intersection of prime $F$-ideals, this can also be used to give a bound on the number of arbitrary $F$-ideals.
%Theorem \ref{ThmBoundOnFIdeals} (and also Theorem \ref{ThmBoundOnSubmodules}) should be viewed as a partial generalization of some of the main results of \cite{SharpGradedAnnihilatorsOfModulesOverTheFrobeniusSkewPolynomialRing} and \cite{EnescuHochsterTheFrobeniusStructureOfLocalCohomology}.

Finally, we have the following result in characteristic zero (which also follows from Theorem \ref{ThmTechnicalResult}):
\vskip 6pt
\hskip -12pt
{\bf Theorem \ref{ThmBoundOnLCCenters}.}
{\it Suppose that $(X, \Delta)$ is a log canonical pair and that $x \in X$ is a point with embedding dimension $n$.  Then the number of $d$-dimensional log canonical centers of $(X, \Delta)$ which contain $x$ is less than or equal to ${n \choose d}$.}
\vskip 6pt
\hskip -12pt
Similar techniques can be used to bound the number of log canonical centers of a log Calabi-Yau pair $(X, \Delta)$; see Remark \ref{logcalabiyau}.

We should also mention that, in all of the situations we consider, the given bounds are sharp.  In particular, some variant of Example \ref{variablesexample} can occur in each of these contexts.
%It may be that similar techniques can be used to bound the number of log canonical centers of a pair $(X, \Delta)$ which is log Calabi-Yau or some variant of log Fano (at least when $X$ is projective).  %In particular, it may be that an appropriate section ring $S$ of $X$ is log canonical and the log canonical centers of $(X, \Delta)$ appear as log canonical centers on that section ring.
%However, we will not explore this possibility here.

{\it Acknowledgements: }

The first author began thinking about this question in the fall of 2008, when Jesper Funch Thomsen asked him if he knew how to bound the number of subvarieties compatibly split by a fixed Frobenius splitting of a projective variety.  The authors would also like to thank Anurag Singh for several very useful conversations as well as Sam Payne, Karen Smith, S\'andor Kov\'acs, Alan Stapledon, and Anne Shiu for discussions.  The authors would also like to thank the referee for several very helpful suggestions.  The authors spent many hours working on this problem at MSRI in the Winter of 2009.

\section{Pseudo-prime systems of ideals}

All rings in this paper will be assumed to be commutative with unity, Noetherian, and excellent.  All schemes will be assumed to be Noetherian and separated.  By a variety over a field $k$, we mean a separated integral scheme of finite type over $\Spec k$.

Let $R$ be a local ring, and $Q$ a proper ideal of $R$.  Recall that the \emph{dimension} or \emph{coheight} of $Q$ is simply the dimension of the local ring $R/Q$.  We say $Q$ is \emph{equidimensional} if all of the minimal primes of $R/Q$ have the same dimension.

Note that if a prime ideal of an excellent local ring $R$ is extended to the completion $\hat{R}$, it may no longer be prime.
It is because of this issue, and the fact that we complete in the proof of our main technical result, Theorem \ref{ThmTechnicalResult}, that we now introduce the notion of a pseudo-prime system of ideals.  These should be thought of as a generalization of a set of prime ideals, and are meant to capture properties of collections of prime ideals which are preserved under completion; see Proposition \ref{change}.

\begin{definition}
Let $R$ be a local ring, and $\sQ$ a set of ideals in $R$.  We say that $\sQ$ is a \emph{pseudo-prime system} if the following two conditions hold:
\begin{enumerate}
\item
Every $Q \in \sQ$ is proper, radical, and equidimensional.
\item
If $Q_{1}, Q_{2} \in \sQ$ and some minimal prime of $Q_{1}$ is contained in a minimal prime of $Q_{2}$, then $Q_{1} \subseteq Q_{2}$.
\end{enumerate}
We shall denote by $e(R,\sQ,d)$ the number of ideals in $\sQ$ of dimension $d$.
\end{definition}

\begin{remark}
If $\sQ$ is a pseudo-prime system,  then any subset of $\sQ$ is also a pseudo-prime system.  Further, it follows from (2) that distinct elements of $\sQ$ cannot have a minimal prime in common. Note that any set of prime ideals is a pseudo-prime system.
\end{remark}

\begin{example}[Behavior of dimension in chains of $\sQ$] Suppose $\sQ$ is a pseudo-prime system, and
$Q_{1} \subsetneq Q_{2} \subsetneq \ldots \subsetneq Q_{r}$ is a chain of ideals in $\sQ$.  Condition (2) guarantees that the dimension of $Q_{i+1}$ is strictly less than the dimension of $Q_{i}$.  Indeed, suppose that $\dim(Q_{i+1}) = \dim(Q_{i})$ for some $i$.  Let $P_{i+1}$ be a minimal prime of $Q_{i+1}$.  Since $Q_{i} \subset P_{i+1}$, there exists a minimal prime $P_{i}$ of $Q_{i}$ with $P_{i} \subset P_{i+1}$.  It follows from $\dim(R/P_{i}) = \dim(Q_{i}) = \dim(Q_{i+1}) = \dim(R/P_{i+1})$ that $P_{i} = P_{i+1}$.  From (2), we see that $Q_{i+1} = Q_{i}$, which is a contradiction.
\end{example}

We now address the stability of pseudo-prime systems under various algebraic operations.  Again, note that part (i.) is the essential point which will allow us to reduce to the case of a complete ring in the proof of our main theorem.

\begin{proposition}
\label{change}
Suppose $\sQ$ is a pseudo-prime system in an excellent local ring $(R,\m,k)$.
\begin{enumerate}[(i.)]
\item
Let $\hat R$ denote the $\m$-adic completion of $R$.
The set of ideals $\sQ \hat R := \{ \, Q \hat R \, | \, Q \in \sQ \, \}$ of $\hat R$ is a pseudo-prime system, and the map $ Q \mapsto Q \hat R$ gives a bijection between $\sQ$ and $\sQ \hat R$.  In particular, we have $e(R,\sQ,d) = e(\hat R, \sQ \hat R, d)$ for all $d$.
\item
If $P$ is a prime ideal in $R$, then the set of ideals $\sQ R_{P} := \{ \, Q R_{P} \, | \, Q \in \sQ \mbox{ and } Q \subseteq P \, \}$ of $R_{P}$ is a pseudo-prime system, and the map $Q \mapsto Q R_{P}$ gives a bijection between $\{ \, Q \in \sQ \, | \, Q \subseteq P \, \}$ and $\sQ R_{P}$.  In particular, if $p$ is the dimension of $P$, we have that $e(R_{P}, \sQ R_{P}, d-p)$ equals the number of ideals $Q \in \sQ$ with dimension $d$ such that $Q \subseteq P$.
\item
If $I$ is any ideal of $R$, then the set of ideals $\sQ/I := \{ \, Q/I \, | \, Q \in \sQ \mbox{ and } I \subseteq Q \, \}$ of $R/I$ is a pseudo-prime system, and the map $Q \mapsto Q/I$ gives a bijection between $\{ \, Q \in \sQ \, | \, I \subseteq Q \, \}$ and $\sQ/I$.  In particular, we have that $e(R/I, \sQ/I, d)$ equals the number of ideals $Q \in \sQ$ with dimension $d$ such that $I \subseteq Q$.
\item
If $\phi: S \to R$ is a surjective morphism of local rings, then the set of ideals $\phi^{-1}(\sQ) := \{ \, \phi^{-1}(Q) \, | \, Q \in \sQ \, \}$ is a pseudo-prime system, and the map $Q \mapsto \phi^{-1}(Q)$ gives a bijection between $\sQ$ and $\phi^{-1}(\sQ)$.  In particular, we have $e(R,\sQ,d) = e(S, \phi^{-1}(\sQ), d)$ for all $d$.

\end{enumerate}
\end{proposition}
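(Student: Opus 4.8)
The plan is to isolate part (i.) as the main place where excellence of $R$ does real work, and to read parts (ii.)--(iv.) off the definition of a pseudo-prime system, exploiting that localization, passage to a quotient, and pullback along a surjection each induce an inclusion-preserving bijection of the relevant spectra --- one that leaves dimensions and containments of minimal primes undisturbed. For each part I would carry out the same three verifications: that every image ideal is proper, radical, and equidimensional of the asserted dimension; that condition (2) of the definition is inherited; and that the displayed correspondence is a bijection. The hard part will be the first two verifications in part (i.).

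For (i.), fix $Q \in \sQ$. The ideal $Q\hat R$ is radical because $R/Q$ is reduced and excellent, hence a G-ring with geometrically reduced formal fibres, so $\widehat{R/Q} = \hat R/Q\hat R$ is reduced; it is equidimensional because $R/Q$ is equidimensional and universally catenary, hence formally equidimensional (Ratliff), so $\hat R/Q\hat R$ is equidimensional, and of the same dimension since completing a Noetherian local ring preserves dimension; and it is proper since $Q\hat R \subseteq \m\hat R$. Faithful flatness of $R \to \hat R$ gives $Q\hat R \cap R = Q$, making $Q \mapsto Q\hat R$ an injective, dimension-preserving map onto $\sQ\hat R$, so $e(R,\sQ,d) = e(\hat R,\sQ\hat R,d)$. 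For condition (2), suppose a minimal prime $\hat P_1$ of $Q_1\hat R$ lies inside a minimal prime $\hat P_2$ of $Q_2\hat R$; contracting to $R$ and applying going-down for the flat map $R \to \hat R$, each $\hat P_i \cap R$ must be minimal over $Q_i$ (a strictly smaller prime of $R$ containing $Q_i$ would lift to a prime strictly inside $\hat P_i$ still containing $Q_i\hat R$), so condition (2) for $\sQ$ gives $Q_1 \subseteq Q_2$, hence $Q_1\hat R \subseteq Q_2\hat R$.

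Parts (iii.) and (iv.) then reduce to bookkeeping. In (iii.), for $I \subseteq Q$ one has $(R/I)/(Q/I) = R/Q$, so properness, radicality, equidimensionality, and dimension are inherited verbatim, condition (2) for $\sQ/I$ is condition (2) for $\sQ$ read through the embedding $\Spec(R/I) \hookrightarrow \Spec R$, and $Q \mapsto Q/I$ is visibly a bijection on $\{Q \in \sQ : I \subseteq Q\}$. In (iv.), I would put $J = \ker\phi$ and use $S/J \cong R$: then $Q \mapsto \phi^{-1}(Q)$ carries $\sQ$ to a set of ideals of $S$ supported on $V(J)$, and $S/\phi^{-1}(Q) \cong R/Q$ together with the homeomorphism $V(J) \cong \Spec R$ yields all the required properties and $e(R,\sQ,d) = e(S,\phi^{-1}(\sQ),d)$.

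For (ii.), localizing at $P$ discards exactly the minimal primes of $Q$ not contained in $P$, so the minimal primes of $QR_P$ are the $P'R_P$ with $P'$ minimal over $Q$ and $P' \subseteq P$; radicality survives localization, $Q \mapsto QR_P$ is injective on $\{Q \in \sQ : Q \subseteq P\}$ (two such ideals with equal localization share a minimal prime, hence agree by condition (2)), and condition (2) for $\sQ R_P$ holds because $P'R_P \subseteq P''R_P$ iff $P' \subseteq P''$ for primes inside $P$. The one point that is not pure bookkeeping is the dimension count: for $P'$ minimal over $Q$ with $P' \subseteq P$, the local domain $R/P'$ is catenary (a quotient of the excellent ring $R$), so $\height(P/P') + \dim R/P = \dim R/P'$; since $\dim R/P' = d$ for every such $P'$ by equidimensionality of $Q$, and $\dim R/P = p$, every minimal prime of $QR_P$ has coheight $d - p$, so $QR_P$ is equidimensional of dimension $d - p$, which gives the asserted value of $e(R_P, \sQ R_P, d-p)$.
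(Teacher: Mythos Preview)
Your proof is correct and follows essentially the same route as the paper's: for (i.) you invoke the same excellence-based facts (reduced and equidimensional are preserved under completion) and the same faithful-flatness/contraction argument for condition (2), for (ii.) you use catenarity just as the paper does, and for (iii.)--(iv.) you spell out the correspondence-theorem bookkeeping that the paper leaves to the reader. The only difference is that you give more detail where the paper is terse---in particular, your going-down justification of why $\hat P_i \cap R$ is minimal over $Q_i$ fills in a step the paper simply asserts.
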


\begin{proof}
{\itshape (i.)} Recall that the completion of a reduced equidimensional excellent local ring remains reduced and equidimensional.  See, for example, 7.8.3 (vii) and  (x) in \cite{EGAIV2}. Thus, applying this fact to $R/Q$ for $Q \in \sQ$, we see that $\sQ \hat R$ satisfies (1).  To verify (2), suppose that $Q_{i} \in \sQ$ and $P_{i}$ is a minimal prime of $Q_{i} \hat R$ for $i = 1,2$.  If $P_{1} \subseteq P_{2}$, then we have $P_{1} \cap R \subseteq P_{2} \cap R$.  But $P_{i} \cap R$ is necessarily a minimal prime of $Q_{i}$ for $i = 1,2$.  Thus, it follows that $Q_{1} \subseteq Q_{2}$, whence $Q_{1} \hat R \subseteq Q_{2} \hat R$.  Finally, since $R \to \hat R$ is faithfully flat, we have $I \hat R \cap R = I$ for any ideal $I$ in $R$.  Thus, the map $Q \hat R \mapsto Q \hat R \cap R$ is an inverse to the map $Q \mapsto Q \hat R$  between $\sQ$ and $\sQ \hat R$.

{\itshape (ii.)}  Set $\phi : R \rightarrow R_P$ to be the canonical map.  Without loss of generality, we may assume $Q \subseteq P$ for all $Q \in \sQ$.    It follows immediately that $Q R_{P}$ is a proper radical ideal in $R_{P}$. Since $R$ is excellent, it is catenary, and it follows that $Q R_{P}$ is equidimensional.  Thus, $\sQ R_{P}$ satisfies (1).  To verify (2), suppose that $Q_{i} \in \sQ$ and $P_{i}$ is a minimal prime of $Q_{i} R_{P}$ for $i = 1,2$.  If $P_{1} \subseteq P_{2}$, then we have $\phi^{-1}(P_1) \subseteq \phi^{-1}(P_2)$.  But $\phi^{-1}(P_{i})$ is necessarily a minimal prime of $Q_{i}$ for $i=1,2$.  Thus, it follows that $Q_{1} \subseteq Q_{2}$, whence $Q_{1} R_{P} \subseteq Q_{2} R_{P}$.  Finally, if $Q_{1}', Q_{2}' \in \sQ$ %and $Q_{1}', Q_{2}' \subseteq P$
with $Q_{1}' R_{P} = Q_{2}' R_{P}$, then $Q_{1}' R_{P}$ and $Q_{2}' R_{P}$ certainly have a minimal prime in common, and the previous argument implies that $Q_{1}' = Q_{2}'$.  Thus, the assignment $Q \mapsto Q R_{P}$ gives an injective, and hence also bijective, map between $\sQ$ and $\sQ R_{P}$.

{\itshape (iii.) and (iv.)} Proofs of these statements follow easily from applications of the correspondence theorem, and are left for the reader to produce.
\end{proof}

\section{An intersection condition}

We now state an intersection condition for pseudo-prime systems.  We suggest the reader first think about this definition in the case that $\sQ$ is simply a collection of prime ideals.

\begin{definition} Let $(R,\m,k)$ be a local ring of dimension $n$, and let $\sQ$ be a pseudo-prime system in $R$. We say $\sQ$ is an \emph{intersection compatible system} (or simply a \emph{compatible system}) if for all finite subsets $\alpha_{1}, \ldots, \alpha_{r}$ of $\sQ$, there exists a finite subset $\beta$ of $\sQ$ with
\[
\sum_{i=1}^{r}  \left( \bigcap\limits_{Q \in \alpha_{i}} Q \right) = \bigcap\limits_{Q \in \beta} Q.
\]
In other words, the set of ideals
\[
\sI = \left\{ \left. \quad \bigcap\limits_{Q \in \alpha} Q \quad \right|  \quad \alpha \mbox{ is a finite subset of } \sQ  \quad \right\}
\]
is closed under sums.
\end{definition}

\begin{remark}
The above condition can also be phrased geometrically in the following way. The ideals in $\sQ$ correspond to reduced equidimensional subschemes of the affine scheme $\Spec(R)$.  The collection $\sQ$ is intersection compatible if the set of finite unions of these subschemes is closed under scheme-theoretic intersection.
\end{remark}

\begin{example}
\label{variablesexample}
If $R = k[[ x_{1}, \ldots, x_{n} ]]$ is the ring of formal power series over $k$ with variables $x_{1}, \ldots, x_{n}$, the collection $\sQ$ of prime ideals generated by subsets of the variables is intersection compatible.  Note that, in this example, there are precisely ${n \choose d}$ prime ideals in $\sQ$ of dimension $d$.
\end{example}

\begin{example}[Non-Example]
If $R = k[[x,y]]$, then the collection $\{ \langle x,y \rangle, \langle x \rangle, \langle y \rangle, \langle x + y \rangle \}$ is not intersection compatible.  Although this set of primes is closed under pairwise sum, we have that
\[
\left( \langle x \rangle \cap \langle y \rangle \right) + \langle x + y \rangle = \langle xy, x+y \rangle = \langle x^{2}, x + y \rangle
\]
is not even reduced.
\end{example}

\begin{example}[Chains of prime ideals]
If $Q_{1} \subsetneq Q_{2} \subsetneq \ldots \subsetneq Q_{r}$ is a chain of prime ideals in a local ring $R$, then one can easily verify that $\sQ = \{ Q_{1}, \ldots, Q_{s} \}$ is an intersection compatible system.  In particular, it is easy to construct examples where the subschemes corresponding to the ideals in $\sQ$ have arbitrarily singular components.
\end{example}

\begin{proposition}
\label{compat}
Suppose $\sQ$ is a pseudo-prime system in an excellent local ring $(R,\m,k)$. If $\sQ$ is intersection compatible, then so are $\sQ \hat R$, $\sQ R_{P}$, $\sQ / I$, and $\phi^{-1}(\sQ)$ as described in Proposition \ref{change}.
\end{proposition}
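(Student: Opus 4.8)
The plan is to handle all four cases by the same mechanism. In each, Proposition \ref{change} already provides a bijection between the derived system and a subsystem of $\sQ$, realized by a ``base change'' operation on ideals: extension to $\hat R$, extension to $R_P$, passage to $R/I$, or preimage under $\phi$. So the task reduces to checking (a) that this operation commutes with finite sums and with finite intersections of the ideals involved, and (b) that a finite subset $\beta \subseteq \sQ$ witnessing intersection compatibility of $\sQ$ can be transported to a finite subset witnessing it for the derived system. For (a): any ring map carries finite sums of ideals to finite sums; a flat ring map $R \to S$ additionally satisfies $(\mathfrak a \cap \mathfrak b)S = \mathfrak aS \cap \mathfrak bS$ (tensor $0 \to R/(\mathfrak a \cap \mathfrak b) \to R/\mathfrak a \oplus R/\mathfrak b$ with the flat module $S$, then induct); preimage under any ring map commutes with finite intersections, and preimage under a surjection commutes with finite sums as well (both by the correspondence theorem applied to the kernel); and the same correspondence theorem shows that passage to $R/I$ preserves both operations on ideals containing $I$. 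Issue (b) is where the four cases genuinely differ.

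For $\sQ \hat R$ and for $\phi^{-1}(\sQ)$ there is nothing to do in (b). Given finite subsets of the derived system, lift them via the bijection of Proposition \ref{change} to finite subsets $\alpha_1, \ldots, \alpha_r$ of $\sQ$, pick a finite $\beta \subseteq \sQ$ with $\sum_i \bigcap_{Q \in \alpha_i} Q = \bigcap_{Q \in \beta} Q$, and apply the base change; since $\hat R$ is flat over $R$, and since $\phi^{-1}$ is defined on all of $\sQ$, the image of $\beta$ automatically indexes a finite subset of $\sQ \hat R$, respectively $\phi^{-1}(\sQ)$, with the required property. For $\sQ/I$ this works once one observes that this $\beta$ automatically lies in $\{Q \in \sQ \mid I \subseteq Q\}$: indeed $I$ is contained in each $\bigcap_{Q \in \alpha_i} Q$ (the $\alpha_i$ being lifted from $\sQ/I$), hence in $\sum_i \bigcap_{Q \in \alpha_i} Q = \bigcap_{Q \in \beta} Q$, hence in every member of $\beta$; so after reducing mod $I$ the displayed equality transports verbatim.

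The localization case $\sQ R_P$ needs a little more care, and I expect this to be the main (and only real) obstacle. Localization is flat, so (a) holds, but the witnessing $\beta \subseteq \sQ$ produced above need not consist of ideals contained in $P$, and only such ideals index elements of $\sQ R_P$. The remedy is to note that $Q R_P = R_P$ whenever $Q \not\subseteq P$ (otherwise $Q R_P$ would be proper, hence contained in the maximal ideal $P R_P$, forcing $Q \subseteq P R_P \cap R = P$), so those members of $\beta$ contribute nothing once localized; setting $\beta' := \{Q \in \beta \mid Q \subseteq P\}$, one obtains $\sum_i \bigcap_{Q \in \alpha_i} Q R_P = (\bigcap_{Q \in \beta} Q) R_P = \bigcap_{Q \in \beta} Q R_P = \bigcap_{Q \in \beta'} Q R_P$, and by Proposition \ref{change}(ii.) the set $\{Q R_P \mid Q \in \beta'\}$ is the desired finite subset of $\sQ R_P$. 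One should also keep track of the degenerate case where a sum or intersection is indexed by the empty set and the unit ideal appears, but this is handled by the usual conventions; apart from this bookkeeping every step is formal.
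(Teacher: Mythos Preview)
Your argument is correct and follows essentially the same approach as the paper: reduce to the fact that each base change operation commutes with finite sums and finite intersections (using flatness for $\hat R$ and $R_P$, and the ideal correspondence for quotients and surjective preimages). You are in fact more careful than the paper, which simply asserts ``it follows immediately'' without explicitly addressing the transport of the witnessing set $\beta$; your observation that members of $\beta$ not contained in $P$ localize to the unit ideal in $R_P$, and that $\beta$ automatically lies over $I$ in the $\sQ/I$ case, fills in exactly what the paper leaves implicit.
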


\begin{proof}
If $T$ is any $R$-algebra and $I_{1}, I_{2}$ are ideals in $R$, we always have $I_{1} T + I_{2} T = (I_{1}+I_{2})T$.  In addition, when $T$ is flat over $R$, we have $I_{1}T \cap I_{2}T = (I_{1} \cap I_{2})T$.  Since $\hat R$ and $R_{P}$ are flat over $R$, it follows immediately that $\sQ \hat R$ and $\sQ R_{P}$ are intersection compatible.

Similarly, if $I_{1}, I_{2}$ are ideals in $R$, and $\phi:S \to R$ is any morphism, we always have $\phi^{-1}(I_{1}) \cap \phi^{-1}(I_{2}) = \phi^{-1}(I_{1} \cap I_{2})$.  In addition, when $\phi$ is surjective, we have $\phi^{-1}(I_{1}) + \phi^{-1}(I_{2}) = \phi^{-1}(I_{1}+I_{2})$.  It follows that $\phi^{-1}(\sQ)$ is intersection compatible.

Finally, if $I_{1}, I_{2}$ are ideals in $R$ containing an ideal $I$, then $I_{1} / I + I_{2}/I = (I_{1}+I_{2})/I$ and $(I_{1}/I) \cap (I_{2}/I) = (I_{1} \cap I_{2}) / I$.  Thus, $\sQ / I$ is intersection compatible as well.
\end{proof}

\section{The main technical result}

We first need the following elementary result.

\begin{lemma}
\label{ebound}
Suppose $(R,\m,k)$ is a local ring, and $I_{1}, I_{2}$ are two ideals in $R$ such that $I_{1} + I_{2} = \m$.
Then
 \[
 \dim_{k} \left( \m/I_{1} \Big/ \left(\m/I_{1} \right)^{2} \right)
 +  \dim_{k} \left( \m/I_{2} \Big/ \left(\m/I_{2} \right)^{2} \right)
\leq  \dim_{k} \left( \m / \m^{2} \right).
\]
\end{lemma}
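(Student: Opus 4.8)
The plan is to reduce the inequality to elementary linear algebra in the single $k$-vector space $V := \m/\m^{2}$. First I would note that since $I_{1}+I_{2}=\m$ is a proper ideal of the local ring $R$, both $I_{1}$ and $I_{2}$ lie in $\m$; hence $R/I_{j}$ is again local with maximal ideal $\m/I_{j}$, and the quantity $\dim_{k}\!\big(\m/I_{j}\,\big/\,(\m/I_{j})^{2}\big)$ is simply its embedding dimension.

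Next I would identify the cotangent space of $R/I_{j}$ with a quotient of $V$. Since $(\m/I_{j})^{2}=(\m^{2}+I_{j})/I_{j}$, the third isomorphism theorem gives $\m/I_{j}\,\big/\,(\m/I_{j})^{2}\cong \m/(\m^{2}+I_{j})$. Writing $W_{j}\subseteq V$ for the image of $I_{j}$ under the projection $\m\to V$, the subgroup $\m^{2}+I_{j}$ maps onto $W_{j}$, so $\m/(\m^{2}+I_{j})\cong V/W_{j}$ and therefore
\[
\dim_{k}\!\Big(\m/I_{j}\,\big/\,(\m/I_{j})^{2}\Big)=\dim_{k}V-\dim_{k}W_{j}\qquad (j=1,2).
\]

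The one place the hypothesis enters is in observing that $\m=I_{1}+I_{2}$ forces $W_{1}+W_{2}=V$: the image of $\m$ in $V$ is all of $V$ and equals the image of $I_{1}+I_{2}$, which is $W_{1}+W_{2}$. The Grassmann dimension formula then yields $\dim_{k}W_{1}+\dim_{k}W_{2}\ge \dim_{k}(W_{1}+W_{2})=\dim_{k}V$, and adding the two displayed identities gives
\[
\dim_{k}\!\Big(\m/I_{1}\,\big/\,(\m/I_{1})^{2}\Big)+\dim_{k}\!\Big(\m/I_{2}\,\big/\,(\m/I_{2})^{2}\Big)=2\dim_{k}V-\dim_{k}W_{1}-\dim_{k}W_{2}\le \dim_{k}V,
\]
which is exactly the assertion of the lemma. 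There is essentially no obstacle here; the only steps needing a moment of care are checking that $I_{j}\subseteq\m$, so that the quotient rings are local and the cotangent-space interpretation is legitimate, and the clean identification $\m/I_{j}\,\big/\,(\m/I_{j})^{2}\cong V/W_{j}$.
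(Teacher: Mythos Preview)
Your proof is correct and takes a genuinely different route from the paper's. The paper argues via the diagonal map $\delta\colon\m\to(\m/I_{1})\oplus(\m/I_{2})$: using $I_{1}+I_{2}=\m$ it shows $\delta$ is surjective with kernel $I_{1}\cap I_{2}$, giving $\m/(I_{1}\cap I_{2})\cong(\m/I_{1})\oplus(\m/I_{2})$; since this isomorphism is induced by the ring map $R\to R/I_{1}\times R/I_{2}$, passing to cotangent spaces yields the \emph{equality} of the left-hand side with the embedding dimension of $R/(I_{1}\cap I_{2})$, which is then bounded above by $\dim_{k}(\m/\m^{2})$. Your argument is more elementary and self-contained, staying entirely inside the single vector space $V=\m/\m^{2}$ and invoking only the Grassmann formula; the price is that you do not recover that intermediate equality, but the paper never uses it elsewhere.
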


\begin{proof}
Consider the diagonal mapping
\begin{eqnarray*}
\delta : \m & \to & ( \m / I_{1}) \oplus (\m / I_{2}) \\
m & \mapsto & (m + I_{1}, m + I_{2}).
\end{eqnarray*}
Using that $I_{1}+I_{2} = \m$, it is easy to check that this map is surjective.  Indeed, suppose $e_{1}, e_{2} \in \m$.  We can write
\[ e_{1} = a_{1} + b_{1} \qquad e_{2} = a_{2} + b_{2}
\]
where $a_{1},a_{2} \in I_{1}$ and $b_{1},b_{2} \in I_{2}$.  Then we have
\[
\delta(a_{2} + b_{1}) = (b_{1} + I_{1}, a_{2} + I_{2}) = (e_{1} + I_{1}, e_{2} + I_{2}).
\]
Thus, $\delta$ is surjective.
Since the kernel of $\delta$ is manifestly equal to $I_{1} \cap I_{2}$, we have an induced isomorphism
\[
\m / \left( I_{1} \cap I_{2} \right) \cong ( \m / I_{1}) \oplus (\m / I_{2}).
\]
Thus, it follows that
\[
 \dim_{k} \left( \m/I_{1} \Big/ \left(\m/I_{1} \right)^{2} \right)
 +  \dim_{k} \left( \m/I_{2} \Big/ \left(\m/I_{2} \right)^{2} \right)
=  \dim_{k} \left( \m/(I_{1}\cap I_{2}) \Big/ \left(\m/(I_{1}\cap I_{2}) \right)^{2} \right)
\]
and the conclusion now follows since
\[
\dim_{k} \left( \m/(I_{1}\cap I_{2}) \Big/ \left(\m/(I_{1}\cap I_{2}) \right)^{2} \right) \leq
 \dim_{k} \left( \m / \m^{2} \right).
\]
\end{proof}

We now prove our main result.

\begin{theorem}
\label{ThmTechnicalResult} Let $E(d,n)$ be the supremum of the numbers $e(R,\sQ,d)$, where $R$ and $\sQ$ vary over all excellent local rings $(R,\m,k)$ with embedding dimension $n = \dim_{k}(\m / \m^{2})$ and all intersection compatible systems $\sQ$ in $R$.  Then $E(d,n) = {n \choose d}$.  In particular, for any intersection compatible system $\sQ$ of \emph{prime} ideals in such a ring $R$, $e(R, \sQ, d) \leq {n \choose d}$.
\end{theorem}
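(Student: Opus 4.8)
The plan is to establish the two inequalities $E(d,n) \geq \binom{n}{d}$ and $E(d,n) \leq \binom{n}{d}$ separately. The lower bound is already handled: Example \ref{variablesexample} exhibits, for $R = k[[x_1,\dots,x_n]]$, an intersection compatible system of prime ideals with exactly $\binom{n}{d}$ members of dimension $d$, so $E(d,n) \geq \binom{n}{d}$. The content is the upper bound, and for this I would argue by induction on $n$, the embedding dimension. The base cases $n = 0$ (and small $d$) are immediate. For the inductive step, given an excellent local ring $(R,\m,k)$ of embedding dimension $n$ and an intersection compatible system $\sQ$, I would first reduce to a complete regular local ring: by Proposition \ref{change}(i.) and Proposition \ref{compat}, passing to $\hat R$ preserves both $e(R,\sQ,d)$ and the hypotheses, so we may assume $R$ is complete; then by Cohen's structure theorem $R$ is a quotient of a complete regular local ring $S$ of the same embedding dimension $n$, and Proposition \ref{change}(iv.) together with Proposition \ref{compat} lets us pull $\sQ$ back to an intersection compatible system $\phi^{-1}(\sQ)$ in $S$ with the same counting function. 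So without loss of generality $R$ is a complete regular local ring of dimension $n$.

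Now suppose $\sQ$ contains ideals $Q_1,\dots,Q_m$ of dimension $d$ (these are distinct, hence — being radical and, as we will see, essentially prime-like — pairwise with no common minimal prime, so in a regular local ring they are distinct primes $\mathfrak{p}_1,\dots,\mathfrak{p}_m$ of coheight $d$). I want to show $m \leq \binom{n}{d}$. The key device is Lemma \ref{ebound}: if $I_1 + I_2 = \m$ then the embedding dimensions of $R/I_1$ and $R/I_2$ add up to at most $n$. The strategy is to split off one of the $Q_i$ — say $Q_1$ — and compare. Since $\sQ$ is intersection compatible, $J := \bigcap_{i \geq 2} Q_i$ lies in $\sI$, and so does $Q_1 + J$; in particular $Q_1 + J$ is again an intersection of members of $\sQ$, hence radical. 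Consider the two ideals $Q_1$ and $J$ inside $R$: we do not necessarily have $Q_1 + J = \m$, but we can localize. The cleanest route: pick a minimal prime — here just $Q_1$ itself — and work in $R_{Q_1}$, or rather work modulo $Q_1 + J$ and its minimal primes, applying Proposition \ref{change}(ii.)/(iii.) to control how dimensions shift. Each minimal prime $P$ of $Q_1 + J$ has dimension $< d$ (it strictly contains $Q_1$), and modding out, the images of the $Q_i$ for $i \geq 2$ that are contained in $P$ together with the splitting provided by Lemma \ref{ebound} at $P$ let me bound the count of the $Q_i$ "near" $P$ by induction in a ring of strictly smaller embedding dimension.

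The precise bookkeeping is the main obstacle. I expect the clean argument to run as follows: by reindexing, for each $i \geq 2$ either $Q_i \supseteq Q_1 + Q_i \supsetneq Q_1$, and $Q_1 + Q_i$ is a radical ideal of dimension $< d$; localizing at a suitable prime containing $Q_1 + Q_i$ drops the embedding dimension, and Lemma \ref{ebound} applied to $Q_1$ and $Q_i$ after localization shows the embedding dimension available to $Q_i$ is at most $n - (\text{something} \geq 1)$. Iterating via the inductive hypothesis should give, for the $Q_i$ of dimension $d$, a count bounded by the number of $d$-subsets of an $n$-set, i.e.\ $\binom{n}{d}$ — the Pascal recursion $\binom{n}{d} = \binom{n-1}{d-1} + \binom{n-1}{d}$ mirroring the dichotomy "$Q_i$ passes through the distinguished direction cut out by $Q_1$, or it does not." The delicate points are: (a) ensuring that after localization the system remains intersection compatible of the right embedding dimension (covered by Propositions \ref{change} and \ref{compat}), and (b) verifying that the dimension/embedding-dimension arithmetic in the regular local ring $R$ — where $\dim R/Q_i + \height Q_i = n$ and embedding dimensions behave predictably — makes the Lemma \ref{ebound} split genuinely decrease the parameter by enough to close the induction. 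I would handle (b) by exploiting regularity: for a prime $\mathfrak{p}$ in a regular local ring, $R_{\mathfrak p}$ is regular of dimension $\height \mathfrak p = n - \dim R/\mathfrak p$, so embedding dimension and codimension are the same coin, and the combinatorics of Lemma \ref{ebound} becomes exactly the additivity of codimensions under transverse intersection.
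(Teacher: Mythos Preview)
Your reduction to a complete regular local ring is exactly right and matches the paper. The gap is in the inductive step: the ``split off $Q_1$'' strategy does not organize itself into the Pascal recursion in the way you suggest. Your $Q_1$ is a prime of height $n-d$, not of height $1$, so there is no single ``distinguished direction cut out by $Q_1$'' to condition on; localizing at $Q_1$ gives a regular ring of dimension $n-d$, in which the other dimension-$d$ members of $\sQ$ contained in $Q_1$ become zero-dimensional --- so you recover at most one, not $\binom{n-1}{d-1}$. Your plan to use Lemma~\ref{ebound} on $Q_1$ and $J$ (or on $Q_1$ and $Q_i$) is also not grounded: the lemma requires $I_1 + I_2 = \m$, and you have no mechanism to force this, nor to control what happens when it fails. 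You flag the bookkeeping as ``the main obstacle,'' but it is really the main idea that is missing.

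The paper's argument supplies that idea. It inducts lexicographically on $(d,n)$ and, after reducing to regular $R$, first disposes of the case $\m \notin \sQ$ by finding a common maximal element $Z \in \sQ$ and localizing there (reducing $d$). In the main case $\m \in \sQ$, it chooses a \emph{maximal} element $Y$ of $\sQ \setminus \{\m\}$, sets $y = \dim Y \geq 1$, and localizes at a minimal prime $P_Y$ of $Y$: this bounds the number of dimension-$d$ members of $\sQ$ contained in $P_Y$ by $\binom{n-y}{d-y} \leq \binom{n-1}{d-1}$. If more than $\binom{n-1}{d}$ of the remaining dimension-$d$ members exist, their intersection $I$ gives $R/I$ of embedding dimension $n$ by induction on $n$ (same $d$). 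Now Lemma~\ref{ebound} is used not to split dimensions, but as an obstruction: since $\dim_k (\m/I)/(\m/I)^2 = n$ and $\dim_k (\m/Y)/(\m/Y)^2 \geq 1$, we must have $I + Y \neq \m$. Compatibility then forces $I + Y$ to be an intersection of members of $\sQ$ containing $Y$, and \emph{maximality of $Y$} forces $I + Y = Y$, i.e.\ $I \subseteq Y \subseteq P_Y$ --- contradicting the choice of the $Q_i$'s. The pivot you were missing is precisely this: work with a maximal element of $\sQ$, and use Lemma~\ref{ebound} to rule out $I+Y=\m$ so that maximality bites.
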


\begin{proof}
From example \ref{variablesexample}, it follows that $E(d,n) \geq {n \choose d}$.  Suppose, by way of contradiction, we have $E(d,n) > {n \choose d}$ for some values of $n$ and $d$.  We may assume that $(d,n)$ is the smallest pair of natural numbers with this property, where $\N^{2}$ is ordered lexicographically.  In other words, we have $E(d',n') \leq {n' \choose d'}$ whenever $d' < d$, or $d' = d$ and $n' < n$.

Let $\sQ$ be an intersection compatible system in an excellent local ring $(R,\m,k)$ with embedding dimension $n = \dim_{k}(\m / \m^{2})$ such that $e(R,\sQ,d) > {n \choose d}$.  If $\hat R$ denotes the $\m$-adic completion of $R$,  it follows from the structure theory of complete local rings that there is a surjective morphism $\phi: S \to \hat R$ where $S$ is a complete regular local ring of dimension $n$.  See, for example, Theorem 7.16 in \cite{Eisenbud}. By Propositions \ref{change} and \ref{compat}, we have that $\sQ \hat R$ and $\phi^{-1}(\sQ \hat R)$ are again compatible systems, and $e(R, \sQ, d) = e(\hat R, \sQ \hat R, d) = e(S, \phi^{-1}(\sQ \hat R), d)$.  Replacing $R$ by $S$ and $\sQ$ by $\phi^{-1}(\sQ \hat R)$, we may assume that $R$ is a complete regular local ring of dimension $n$.

Suppose first that $\m \not \in \sQ$, and consider $\sum_{Q \in \sQ} Q$. Since $R$ is Noetherian, there exist $Q_{1}, \ldots, Q_{r} \in \sQ$ with
$\sum_{Q \in \sQ} Q = \sum_{i=1}^{r} Q_{i}$.  As $\sQ$ is intersection compatible, there is some $Z \in \sQ$ with $ \sum_{i=1}^{r} Q_{i} \subseteq Z$.  Thus, we have $Z \in \sQ$ and $Q \subseteq Z$ for all $Q \in \sQ$.  Since $\m \not\in \sQ$, we have $z := \dim Z \geq 1$.  Let $P_{Z}$ be a minimal prime of $Z$.  By Proposition \ref{change}, we have $e(R, \sQ, d) = e(R_{P_{Z}}, \sQ R_{P_{Z}}, d-z)$.  Since $\sQ R_{P_{Z}}$ is compatible by Proposition \ref{compat}, it follows from the minimality of $(d,n)$ that
\[
e(R,\sQ,d) = e(R_{P_{Z}}, \sQ R_{P_{Z}}, d-z) \leq {n-z \choose d - z } \leq {n \choose d},
\]
which is a contradiction.  Note that we have used the fact that $R$ is regular to control the embedding dimension of $R_{P_{Z}}$.

Thus, we may assume that $\m \in \sQ$.  Since $\m$ is the only prime ideal of dimension zero in $R$, it follows that $e(R,\sQ,0) = {n \choose 0} = 1$, so we must have $d \geq 1$.  Thus, the collection $\sQ \setminus \{ \m \}$  is nonempty and must have a maximal element $Y$, as $R$ is Noetherian.  In other words, we have $\m \neq Y \in \sQ$ and $\{\, Q \in \sQ \, | \, Y \subseteq Q \, \} = \{ Y,\m \}$.   Set $y := \dim Y \geq 1$, and let $P_{Y}$ be a minimal prime of $Y$.  Let $\sP = \{ \, Q \in \sQ \, | \, Q \subseteq P_{Y} \, \}$.  By Propositions \ref{change} and \ref{compat}, we have $e(R, \sP,d) = e(R_{P_{Y}}, \sQ R_{P_{Y}}, d-y)$ and $\sQ R_{P_{Y}}$ is compatible.  By the minimality of $(d,n)$, it follows that $e(R,\sP,d) \leq {n-y \choose d -y} \leq {n-1 \choose d-1}$.  Again, we have used that $R$ is regular to control the embedding dimension of $R_{P_Y}$.  Hence, we must have
\[
e(R, \sQ \setminus \sP, d) = e(R, \sQ, d) - e(R, \sP, d) > {n \choose d} - {n-1 \choose d-1} = {n-1 \choose d}.
\]

Let $Q_{1}, \ldots, Q_{{n-1 \choose d} + 1}$ be distinct elements of $\sQ \setminus \sP$ of dimension $d$, and set $I = Q_{1} \cap \cdots \cap Q_{{n-1 \choose d} + 1}$.   If $\sR = \{ \, Q \in \sQ \, | \, I \subseteq Q \, \}$,  we have by construction $e(R, \sR, d) > {n-1 \choose d}$.  By Propositions \ref{change} and \ref{compat}, $\sQ/I$ is again a compatible system, and $e(R, \sR, d) = e(R/I, \sQ/I, d)$.
From the minimality of $(d,n)$, it follows that $ \dim_{k} \left( \m/I \Big/ \left(\m/I \right)^{2} \right) = n$.  Since $\dim_{k} \left( \m/Y \Big/ \left(\m/Y \right)^{2} \right) \geq \dim(R/Y) = \dim Y = y \geq 1$,
Lemma \ref{ebound} implies that $I + Y \neq \m$.  However, since $\sQ$ is compatible, $I + Y$ is equal to the intersection of all $Q \in \sQ$ with $I+Y \subseteq Q$.
By the maximality of $Y$, we must have $I + Y = Y$, i.e. $I \subseteq Y$.

Since $I \subseteq Y \subseteq P_{Y}$, there is a minimal prime $P$ of $I$ with $P \subseteq P_{Y}$.  From the definition of $I$, it follows that there is some $Q_{i}$ having $P$ as a minimal prime.  Since $\sQ$ is a pseudo-prime system, we conclude that $Q_{i} \subseteq Y$.  But this is absurd, since $Y \subseteq P_{Y}$ and $Q_{i}$ was chosen so that $Q_{i} \not\subseteq P_{Y}$.
\end{proof}

\begin{corollary}
If $\sQ$ is an intersection compatible system in an excellent regular local ring $R$ of dimension $n$, then $\sQ$ is finite and there are at most $2^{n}$ ideals in $\sQ$.
\end{corollary}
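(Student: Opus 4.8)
The plan is to reduce this directly to Theorem \ref{ThmTechnicalResult}. First I would note that, since $R$ is a regular local ring of Krull dimension $n$, its embedding dimension satisfies $\dim_{k}(\m/\m^{2}) = n$ --- this is precisely the definition of regularity. Next, every ideal $Q \in \sQ$ is proper, so $R/Q$ has dimension $d$ for some $d \in \{0, 1, \ldots, n\}$, and the elements of $\sQ$ are thereby partitioned according to this dimension.

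For each fixed $d$ in that range, Theorem \ref{ThmTechnicalResult} applies verbatim: $\sQ$ is an intersection compatible system in the excellent local ring $(R,\m,k)$ of embedding dimension $n$, so $e(R, \sQ, d) \leq E(d,n) = {n \choose d}$. Summing over $d = 0, \ldots, n$, the total number of ideals in $\sQ$ is at most $\sum_{d=0}^{n} {n \choose d} = 2^{n}$. In particular $\sQ$ is finite.

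I do not expect any real obstacle here; all of the content lies in Theorem \ref{ThmTechnicalResult}. The one point worth flagging is that we are invoking the bound $E(d,n) = {n \choose d}$ for a general intersection compatible system, not merely one consisting of prime ideals --- but this is exactly what the definition of $E(d,n)$ in Theorem \ref{ThmTechnicalResult} records, so the ``in particular'' clause specializing to systems of primes is not needed for this corollary.
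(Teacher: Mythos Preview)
Your proposal is correct and is exactly the intended argument: the paper states this corollary without proof, and the obvious derivation is precisely the one you give---use regularity to identify the embedding dimension with $n$, apply Theorem \ref{ThmTechnicalResult} for each $d$, and sum the binomial coefficients. Your observation that the full strength of the definition of $E(d,n)$ (for arbitrary pseudo-prime compatible systems, not just primes) is what is being used here is also on point.
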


\section{Compatibly split subvarieties and $F$-ideals}

We begin with some notation.  Throughout this section, we will assume that all rings and all schemes lie over a field of characteristic $p > 0$.
If $X$ is a scheme, we let $F^e : X \rightarrow X$ denote the $e$-iterated Frobenius map.  For any ring $R$ and any $R$-module $M$, we define $F^{e}_{*}M$ in accordance with the geometric notation for quasicoherent sheaves on $\Spec(R)$.  In other words, $F^e_* M$ denotes the $R$-module which is equal to $M$ as an additive group, but has the $R$-module structure $r \cdot x = r^{p^e} x$
induced by the $e$-iterated Frobenius.

\begin{definition}
We say that a ring is \emph{$F$-finite} if $F^1_* R$ is finite as an $R$-module.
\end{definition}

\begin{definition}\cite{MehtaRamanathanFrobeniusSplittingAndCohomologyVanishing}
We say that a scheme $X$ is \emph{$F$-split} if there is a $\O_X$-module splitting $\theta : F^1_* \O_X \rightarrow \O_X$ of the Frobenius map $\O_X \rightarrow F^1_* \O_X$.  A subscheme $Z \subseteq X$ is called \emph{compatibly split with $\theta$} if we have
\[
\theta(F^e_* \I_Z) \subseteq \I_Z
\]
where $\I_Z$ is the ideal sheaf of $Z$.  Note that any such $\I_Z$ is necessarily a radical ideal sheaf \cite[Proposition 1.2.1]{BrionKumarFrobeniusSplitting}.
\end{definition}

\begin{proposition}
\label{PropCompatSplitLocalRing}
Suppose that $(R,\m,k)$ is an $F$-finite local ring of embedding dimension $n = \dim_{k} (\m / \m^{2})$, and $\theta: F^e_{*} R \to R$ is a fixed surjective $R$-linear map.  Then there are at most ${n \choose d}$ prime ideals $Q$ of dimension $d$ such that $\theta(F^e_* Q) \subseteq Q$.
\end{proposition}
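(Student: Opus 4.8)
The plan is to apply Theorem~\ref{ThmTechnicalResult} to the collection
\[
\sQ = \{\, Q \subseteq R \text{ prime} \mid \theta(F^e_* Q) \subseteq Q \,\}
\]
of primes compatibly split by $\theta$. Note that $R$ is excellent because it is $F$-finite, and that any set of prime ideals is automatically a pseudo-prime system; so the only thing requiring an argument is that $\sQ$ is an intersection compatible system, i.e. that the set of finite intersections $\bigcap_{Q\in\alpha}Q$ ($\alpha$ a finite subset of $\sQ$) is closed under sums.

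First I would record the closure properties of the whole collection $\sI = \{\, I \subseteq R \mid \theta(F^e_* I) \subseteq I \,\}$ of $\theta$-compatible ideals. Since $F^e_* R$ has the same underlying abelian group as $R$ and $\theta$ is additive, $F^e_*(I_1 \cap I_2) = F^e_* I_1 \cap F^e_* I_2$ and $F^e_*(I_1 + I_2) = F^e_* I_1 + F^e_* I_2$ inside $F^e_* R$; applying $\theta$ shows $\sI$ is closed under finite intersections and finite sums. Next, the key point: every $I \in \sI$ is radical, with all of its minimal primes lying in $\sQ$. For radicality, use that $\theta$ is surjective to pick $c \in R$ with $\theta(F^e_* c) = 1$ and set $\psi(F^e_* r) := \theta(F^e_*(cr))$; then $\psi : F^e_* R \to R$ is $R$-linear with $\psi(F^e_* 1) = 1$, hence a splitting of the $e$-iterated Frobenius, and $\psi(F^e_* I) = \theta(F^e_*(cI)) \subseteq \theta(F^e_* I) \subseteq I$, so $I$ is compatibly split by $\psi$ and therefore radical by \cite[Proposition~1.2.1]{BrionKumarFrobeniusSplitting}. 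For the minimal primes, write $I = P_1 \cap \cdots \cap P_r$; since $F^e_*$ commutes with localization, $\theta$ localizes to $\theta_{P_i} : F^e_*(R_{P_i}) \to R_{P_i}$ with $\theta_{P_i}(F^e_*(I R_{P_i})) \subseteq I R_{P_i}$, and because $P_i$ is minimal over the radical ideal $I$ we have $I R_{P_i} = P_i R_{P_i}$; hence $\theta_{P_i}(F^e_*(P_i R_{P_i})) \subseteq P_i R_{P_i}$, and contracting along $R \to R_{P_i}$ gives $\theta(F^e_* P_i) \subseteq P_i R_{P_i} \cap R = P_i$, i.e. $P_i \in \sQ$. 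Granting all this, let $\sI_0$ be the set of finite intersections of elements of $\sQ$. Then $\sI_0 \subseteq \sI$, and conversely every $I \in \sI$ lies in $\sI_0$ (it is the intersection of its finitely many minimal primes, all in $\sQ$); since $\sI$ is closed under sums, so is $\sI_0$, and $\sQ$ is intersection compatible.

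Theorem~\ref{ThmTechnicalResult} now gives $e(R, \sQ, d) \le {n \choose d}$, and $e(R,\sQ,d)$ is exactly the number of $d$-dimensional primes $Q$ with $\theta(F^e_* Q) \subseteq Q$, which is the assertion. I expect the main obstacle to be the key point above --- passing from a merely surjective $\theta$ to an honest splitting so that the radicality criterion applies, together with the localization bookkeeping that shows minimal primes remain compatibly split; everything else is formal manipulation and the appeal to Theorem~\ref{ThmTechnicalResult}.
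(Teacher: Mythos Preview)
Your proposal is correct and follows exactly the route the paper takes: verify that the primes with $\theta(F^e_*Q)\subseteq Q$ form an intersection compatible system, note that $F$-finite implies excellent (Kunz), and invoke Theorem~\ref{ThmTechnicalResult}. The paper's own proof simply asserts that intersection compatibility is ``straightforward'' and cites Kunz and Theorem~\ref{ThmTechnicalResult}; you have written out the details the paper omits, including the premultiplication trick to pass from a surjective $\theta$ to a genuine splitting and the localization argument for minimal primes.
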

\begin{proof}
It is straightforward to see that the set of such ideals is an intersection compatible system. Since $F$-finite rings are excellent \cite{Kunz},
the conclusion follows from Theorem \ref{ThmTechnicalResult}.
\end{proof}

\begin{theorem}
\label{ThmCompatiblySplitProjective}
Suppose that $X$ is a projective variety over a $F$-finite field $k$ of characteristic $p > 0$.  Further, suppose that $\sL$ is an ample line bundle on $X$ with associated section ring $S = \oplus_{n \geq 0} H^0(X, \sL^{n})$.  Let $n$ be the embedding dimension of $S$ at the irrelevant ideal $S_+$.  If $\theta : F^e_* \O_X \rightarrow \O_X$ is a splitting of the $e$-iterated Frobenius, then there are at most ${n \choose d+1}$ irreducible $d$-dimensional subschemes of $X$ which are compatibly split with $\theta$.
\end{theorem}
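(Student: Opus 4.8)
The plan is to transport $\theta$ to the section ring $S$ and then apply Proposition \ref{PropCompatSplitLocalRing} at the irrelevant ideal $\m := S_{+}$. Observe that $S$ is a finitely generated algebra over the $F$-finite field $k$, hence $F$-finite (and so excellent, \cite{Kunz}); that $\m$ is a maximal ideal whose residue field $S_{0} = H^{0}(X, \O_{X})$ is a finite field extension of $k$, since $X$ is an integral proper $k$-scheme; and that $S_{\m}$ has embedding dimension $n$ by hypothesis. The localizations at $\m$ of the ideals we care about will remain prime and compatibly split, so the statement reduces to counting prime ideals of $S_{\m}$.

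First I would produce a surjective $S$-linear map $\theta_{S} : F^{e}_{*}S \to S$. For each $m \geq 0$, twisting $\theta : F^{e}_{*}\O_{X} \to \O_{X}$ by $\sL^{m}$ and using the projection formula $F^{e}_{*}\O_{X} \otimes \sL^{m} \cong F^{e}_{*}(\sL^{p^{e}m})$ gives a surjection of sheaves $\theta_{m} : F^{e}_{*}(\sL^{p^{e}m}) \to \sL^{m}$; passing to global sections gives a map $\psi_{m} : S_{p^{e}m} \to S_{m}$. Assembling the $\psi_{m}$ (and declaring them zero on the graded pieces $S_{j}$ with $p^{e} \nmid j$) produces $\theta_{S} : F^{e}_{*}S \to S$. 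Since all the $\theta_{m}$ come from the single $\O_{X}$-linear map $\theta$ by twisting, a direct check with the grading on $F^{e}_{*}S$ shows that $\theta_{S}$ is $S$-linear for the Frobenius module structure $s \cdot t = s^{p^{e}}t$; hence its image is an ideal of $S$, and since $\theta$ is a splitting one has $\psi_{0}(1) = 1$, so $\theta_{S}$ is surjective.

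Next I would set up the dictionary with ideals. To an irreducible $d$-dimensional subscheme $Z \subseteq X$ compatibly split by $\theta$ (necessarily reduced, since a compatibly split ideal sheaf is radical) I associate the saturated homogeneous ideal $P_{Z} := \bigoplus_{m \geq 0} H^{0}(X, \I_{Z} \otimes \sL^{m})$. Because $Z$ is a subvariety, $P_{Z}$ is a homogeneous prime of $S$ contained in $\m$, and $\dim(S/P_{Z}) = d+1$, as $S/P_{Z}$ agrees in large degrees with the section ring of the ample pair $(Z, \sL|_{Z})$. Twisting the inclusion $\theta(F^{e}_{*}\I_{Z}) \subseteq \I_{Z}$ by $\sL^{m}$ and taking global sections shows $\psi_{m}$ carries $(P_{Z})_{p^{e}m}$ into $(P_{Z})_{m}$, i.e. $\theta_{S}(F^{e}_{*}P_{Z}) \subseteq P_{Z}$. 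Finally, distinct subvarieties $Z$ have distinct ideal sheaves, hence distinct $P_{Z}$ (as $\sL$ is ample, sheafifying $P_{Z}$ recovers $\I_{Z}$), and since each $P_{Z}$ is prime and contained in $\m$, the assignment $P_{Z} \mapsto P_{Z}S_{\m}$ is injective on this collection.

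Putting this together: localizing at $\m$ yields a surjective $S_{\m}$-linear map $\theta_{S_{\m}} : F^{e}_{*}S_{\m} \to S_{\m}$, and for each compatibly split $d$-dimensional $Z \subseteq X$ a prime $P_{Z}S_{\m}$ of dimension $d+1$ with $\theta_{S_{\m}}(F^{e}_{*}P_{Z}S_{\m}) \subseteq P_{Z}S_{\m}$, the map $Z \mapsto P_{Z}S_{\m}$ being injective. Proposition \ref{PropCompatSplitLocalRing}, applied to $(S_{\m}, \m S_{\m})$ with the map $\theta_{S_{\m}}$, then bounds the number of such primes by $\binom{n}{d+1}$, which is the asserted bound. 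I expect the only genuine obstacle to be bookkeeping: handling the grading on the Frobenius pushforward $F^{e}_{*}S$ so that $\theta_{S}$ is honestly $S$-linear, and verifying the correspondence between $\theta$-compatibly split subschemes of $\Proj S = X$ and $\theta_{S}$-compatibly split homogeneous primes of $S$; everything afterward is a direct invocation of Proposition \ref{PropCompatSplitLocalRing}.
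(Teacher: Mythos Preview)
Your proposal is correct and follows essentially the same route as the paper's proof: construct the induced splitting $\theta_S$ on the section ring, associate to each compatibly split subvariety its saturated homogeneous ideal, verify compatibility and the dimension shift, then localize at $S_+$ and invoke Proposition~\ref{PropCompatSplitLocalRing}. The only difference is that you spell out the construction of $\theta_S$ and the injectivity of $Z \mapsto P_Z$ explicitly, whereas the paper cites \cite{SmithGloballyFRegular} and \cite{SmithVanishingSingularitiesAndEffectiveBounds} for the former and leaves the latter implicit.
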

\begin{proof}
Note that $\theta$ induces a splitting
\[
\xymatrix{
S \ar[r] &  F^e_* S \ar[r]^{\theta_S} \ar[r] & S.
}
\]
See \cite[Proposition 3.1]{SmithGloballyFRegular} and \cite[Proposition 4.10]{SmithVanishingSingularitiesAndEffectiveBounds} for further details.
For each irreducible compatibly split subscheme $Z$ of $X$, let $\I_Z$ denote the corresponding ideal sheaf.  Let $I_Z$ denote the homogeneous ideal $\oplus_{n \geq 0} H^{0}(X, \I_{Z} \tensor \sL^{n})$ in the graded ring $S$.  Note that, if $Z$ was a variety of dimension $d$, then $I_Z$ is a prime ideal of dimension $d+1$ contained in the irrelevant ideal of $S$.   It easily verified that $\theta_S(F^e_* I_Z) \subseteq I_Z$.  Thus, we can then localize at $S_{+}$, the irrelevant ideal of $S$, and apply Proposition \ref{PropCompatSplitLocalRing}.
\end{proof}

\begin{remark}
\label{RemEmbDimOfProjNormal}
If $X$ is a projective variety over a field $k$ and $\sL$ is any ample line bundle, there is a positive integer $m$ such that the section ring $S = \oplus_{n\geq 0} H^{0}(X, (\sL^{m})^{n})$ is generated in degree one over the field $S_{0} = H^{0}(X, \O_{X})
\supseteq k$.  In this situation, we have that $S_{+} / (S_{+})^{2}$ is isomorphic to $H^{0}(X, \sL^{m})$ as a vector space over $S_0$.  Thus, the embedding dimension $n$ of $S$ at the irrelevant ideal $S_{+}$ equals the dimension of $H^{0}(X, \sL^{m})$ over $S_0$.
Furthermore, when $X$ is normal and $k$ is algebraically closed, a very ample line bundle $\sM$ induces a projectively normal embedding if and only if the associated section ring is generated in degree one over $S_{0} = k$.  In this case, $n$ is equal to $\dim_k H^0(X, \sM)$.
\end{remark}

\begin{remark}
 The referee has pointed out that this result might be used to prove that certain collections of subvarieties of a given projective variety cannot be simultaneously compatibly split (due to purely numerical considerations).
\end{remark}

\begin{remark}
The first author has also recently introduced the notion of ``centers of sharp $F$-purity,'' a characteristic $p > 0$ analog of ``log canonical centers.'' We will not define centers of sharp $F$-purity here.  However, it follows from Proposition \ref{PropCompatSplitLocalRing} that if $(R, \Delta, \a_{\bullet})$ is a sharply $F$-pure triple (see \cite{SchwedeCentersOfFPurity}) where $R$ is a local ring of  embedding dimension $n$, then there are at most ${n \choose d}$ $d$-dimensional centers of sharp $F$-purity for $(R, \Delta, \a_{\bullet})$.
\end{remark}

We conclude this section with an application to annihilators of $F$-stable submodules of $H^d_{\m}(R)$ and $F$-stable submodules of $E_{R}$ (the injective hull of $k$).

\begin{definition} \cite{SmithTestIdeals}
Suppose that $M$ is an $R$-module.  A \emph{Frobenius action} on $M$ is an $R$-linear map $\rho : M \rightarrow F^e_* M$.  We say $N \subseteq M$ is \emph{$F$-stable} (with respect to the Frobenius action $\rho$) if $\rho(N) \subseteq F^e_* N \subseteq F^e_* M$.  If $M = H^{\dim R}_{\m}(R)$ and we are given the canonical Frobenius action $H^{\dim R}_{\m}(R) \rightarrow H^{\dim R}_{\m}(F^e_* R) \cong F^e_*H^{\dim R}_{\m}( R)$,
%%% IS THIS CHANGE OKAY?   IT IS, BUT WHY NOT INCLUDE BOTH?  The reason I like the other one is that I feel it is clearer what exactly the canonical map is
then an \emph{$F$-ideal of $R$} is the annihilator of any $F$-stable submodule of $M$.
\end{definition}

Suppose that $(R, \m,k)$ is a complete $F$-finite local ring, and let $E_{R}$ be the injective hull of $k$.  Then Matlis duality induces a bijection between the $R$-linear maps $\theta : F^e_* R \rightarrow R$ and Frobenius actions $\rho : E_R \rightarrow F^e_* E_R$.  In particular, $\Spec R$ is $F$-split if and only if there exists an injective Frobenius action on $E_R$.
For a fixed Frobenius action $\rho$ on $E_R$ corresponding to $\theta : F^e_* R \rightarrow R$, the $F$-stable submodules of $E_R$ (with respect to $\rho$) are in bijection with the ideals $I \subseteq R$ such that $\theta(F^e_* I) \subseteq I$.

\begin{theorem} \label{ThmBoundOnSubmodules}
Let $(R, \m,k)$ be a complete $F$-finite local ring of embedding dimension $n$, and $E_R$ the injective hull of $k$.  Suppose that we have an injective Frobenius action $\rho : E_R \rightarrow F^e_* E_R$.  Then the number of $F$-stable submodules of $E_R$ with prime annihilators of dimension $d$ is less than or equal to ${n \choose d}$.
\end{theorem}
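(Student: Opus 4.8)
The plan is to translate the statement into one about compatibly split ideals and then invoke Proposition \ref{PropCompatSplitLocalRing}. Since $R$ is complete and $F$-finite, Matlis duality $(-)^{\vee} = \Hom_R(-, E_R)$ identifies $E_R$ with an Artinian module whose submodules $N$ correspond, inclusion-reversingly and bijectively, to ideals $I = \Ann_R(N)$ of $R$, with inverse $I \mapsto \Ann_{E_R}(I) \cong (R/I)^{\vee}$. Moreover, as recalled in the discussion immediately preceding the theorem, the Frobenius action $\rho$ is the Matlis dual of a uniquely determined $R$-linear map $\theta : F^e_* R \to R$, and a submodule $N \subseteq E_R$ is $F$-stable with respect to $\rho$ exactly when the corresponding ideal $I$ satisfies $\theta(F^e_* I) \subseteq I$.

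The first step is to record that injectivity of $\rho$ forces $\theta$ to be surjective. Matlis duality is exact on finitely generated and on Artinian modules, and here $F$-finiteness makes $F^e_* R$ a finitely generated $R$-module so that $(F^e_* R)^{\vee} \cong F^e_* E_R$ canonically; hence the cokernel of $\theta$ dualizes to the kernel of $\rho$, which vanishes by hypothesis. Thus $\theta : F^e_* R \to R$ is a surjective $R$-linear map on an $F$-finite local ring of embedding dimension $n$, exactly the setup of Proposition \ref{PropCompatSplitLocalRing}.

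Next, under the correspondence above, an $F$-stable submodule $N$ with $\Ann_R(N) = Q$ a prime of dimension $d$ corresponds precisely to a prime ideal $Q$ with $\dim(R/Q) = d$ and $\theta(F^e_* Q) \subseteq Q$; since the correspondence is a bijection, the two counts coincide. Proposition \ref{PropCompatSplitLocalRing}, applied to the surjective map $\theta$, then bounds the number of such primes $Q$ by ${n \choose d}$, which is the desired conclusion. The only point demanding any care is the bookkeeping in the Matlis-dual dictionary between $F$-stability of a submodule and $\theta$-compatibility of its annihilator (including that $F^e_*$ commutes with $(-)^{\vee}$ on finitely generated modules); but this is exactly what is recorded just before the statement, so granting it there is no substantial obstacle.
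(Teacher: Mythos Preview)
Your proof is correct and is precisely the argument the paper intends: the paper's proof reads in its entirety ``Apply Matlis duality,'' and you have faithfully unpacked that sentence by dualizing the injective $\rho$ to a surjective $\theta$ and invoking Proposition~\ref{PropCompatSplitLocalRing}. There is no substantive difference in approach.
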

\begin{proof}
Apply Matlis duality.
\end{proof}

\begin{theorem}  \label{ThmBoundOnFIdeals}
Suppose that $(R, \m,k)$ is an $F$-finite local ring of embedding dimension $n$.  Assume further that $\Spec R$ is $F$-split.  Then the set of prime $d$-dimensional $F$-ideals of $R$ is less than or equal to ${n \choose d}$.
\end{theorem}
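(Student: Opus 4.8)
The plan is to reduce to the statement of Proposition~\ref{PropCompatSplitLocalRing}. Since $\Spec R$ is $F$-split, fix an integer $e>0$ and a surjective $R$-linear map $\theta\colon F^e_*R\to R$ splitting the $e$-iterated Frobenius. The heart of the matter is the claim that every prime $F$-ideal $Q$ of $R$ satisfies $\theta(F^e_*Q)\subseteq Q$. Granting this, $Q$ is in particular one of the prime ideals controlled by Proposition~\ref{PropCompatSplitLocalRing} applied to the surjection $\theta$: there are at most ${n\choose d}$ prime ideals $Q$ of dimension $d$ with $\theta(F^e_*Q)\subseteq Q$, and a fortiori at most ${n\choose d}$ prime $d$-dimensional $F$-ideals. (Alternatively, one may note that the set of prime $\theta$-compatible ideals is an intersection compatible system, as in the proof of Proposition~\ref{PropCompatSplitLocalRing}, that the prime $F$-ideals form a subset of it, and that the bound of Theorem~\ref{ThmTechnicalResult} is inherited by subsets since $e(R,\sQ_0,d)\leq e(R,\sQ,d)$ when $\sQ_0\subseteq\sQ$.) Combined with the fact recalled in the introduction, that every $F$-ideal of an $F$-pure ring is an intersection of prime $F$-ideals, this also bounds the number of arbitrary $d$-dimensional $F$-ideals.

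To prove the claim I would first reduce to the case that $R$ is complete. Indeed, $F$-finite rings are excellent \cite{Kunz}, completion leaves the embedding dimension unchanged and preserves the splitting, and $H^{\dim R}_{\m}(R)$ together with its canonical Frobenius action is unaffected by completion; moreover $\theta(F^e_*Q)\subseteq Q$ holds in $R$ if and only if $\hat\theta(F^e_*(Q\hat R))\subseteq Q\hat R$ holds in $\hat R$, since $R\to\hat R$ is faithfully flat and extension of ideals commutes with the $R$-linear map $\theta$. As $R/Q$ is an excellent domain, $Q\hat R$ is radical, hence the intersection of its minimal primes, so it suffices to show each of these is $\hat\theta$-compatible. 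With $R$ complete I would invoke Matlis duality: the canonical action $H^{\dim R}_{\m}(R)\to F^e_*H^{\dim R}_{\m}(R)$ is dual to the canonical trace map $F^e_*\omega_R\to\omega_R$ on the canonical module, and an $F$-stable submodule with annihilator $Q$ dualizes to a submodule of $\omega_R$ stable under this trace and still cutting out $Q$. Using $F$-finiteness (so that $\Hom_R(F^{e'}_*R,E_R)\cong F^{e'}_*E_R$) and the relation, through Grothendieck duality for the finite map $F^{e'}$, between the trace on $\omega_R$ and the module $\Hom_R(F^{e'}_*R,R)$ of $p^{e'}$-linear maps on $R$, one deduces that the prime $F$-ideals are \emph{uniformly $F$-compatible}: $\psi(F^{e'}_*Q)\subseteq Q$ for every $e'$ and every $\psi\in\Hom_R(F^{e'}_*R,R)$. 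This is essentially the identification of prime $F$-ideals with prime uniformly $F$-compatible ideals going back to \cite{SmithTestIdeals} and \cite{EnescuHochsterTheFrobeniusStructureOfLocalCohomology}, made systematic in \cite{SchwedeCentersOfFPurity}; taking $\psi=\theta$ gives the claim.

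The main obstacle is exactly the claim proved in the second paragraph: translating the intrinsic description of an $F$-ideal --- via the canonical Frobenius action on $H^{\dim R}_{\m}(R)$ --- into compatibility with a concretely chosen splitting $\theta\colon F^e_*R\to R$, so that the combinatorial machinery of Theorem~\ref{ThmTechnicalResult} applies. Once this bridge is in place the argument of the first paragraph is routine, relying only on Proposition~\ref{PropCompatSplitLocalRing}, and the reduction to a complete local ring needed for the Matlis duality step is standard. It is in crossing this bridge that the hypothesis that $\Spec R$ is $F$-split --- rather than merely $F$-injective --- genuinely enters, both to produce a surjective $\theta$ at all and to control the annihilators of $F$-stable submodules of $H^{\dim R}_{\m}(R)$.
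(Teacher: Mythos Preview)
Your proposal is correct and follows essentially the same route as the paper: fix a surjective splitting $\theta$, show that every prime $F$-ideal is $\theta$-compatible, and then invoke Proposition~\ref{PropCompatSplitLocalRing}. The paper's proof is terser---it simply cites \cite[Proposition~5.5]{SchwedeCentersOfFPurity} or \cite[Proof of Theorem~4.1]{EnescuHochsterTheFrobeniusStructureOfLocalCohomology} for the stronger fact that \emph{every} $F$-ideal $J$ (not only the prime ones) satisfies $\theta(F^e_*J)\subseteq J$ for \emph{every} $R$-linear $\theta:F^e_*R\to R$---whereas you sketch the underlying Matlis-duality argument; your detour through the minimal primes of $Q\hat R$ is unnecessary since the cited results apply directly to $Q\hat R$ itself.
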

\begin{proof}
It follows from \cite[Proposition 5.5]{SchwedeCentersOfFPurity} or \cite[Proof of Theorem 4.1]{EnescuHochsterTheFrobeniusStructureOfLocalCohomology} that any $F$-ideal $J \subseteq R$ satisfies $\theta(F^e_* J) \subseteq J$ for any $R$-linear map $\theta : F^e_* R \rightarrow R$.  Since $\Spec R$ is $F$-split, there exists a surjective such $\theta$.  The result then follows from Proposition \ref{PropCompatSplitLocalRing}.
\end{proof}

\begin{remark}
Suppose that $M$ is a module with an injective Frobenius action.  Then, since every $F$-stable submodule of $M$ is a sum of $F$-stable submodules with prime annihilators, one can also use Theorem \ref{ThmBoundOnSubmodules} to obtain a bound on the number of arbitrary $F$-stable submodules of $E_R$.  Similarly, in an $F$-pure ring, any $F$-ideal can be written as an intersection of prime $F$-ideals, and Theorem \ref{ThmBoundOnFIdeals} yields a bound on the number of arbitrary $F$-ideals.  See \cite[Theorem 3.10]{SharpGradedAnnihilatorsOfModulesOverTheFrobeniusSkewPolynomialRing} and \cite[Theorem 3.6]{EnescuHochsterTheFrobeniusStructureOfLocalCohomology} for additional discussion.
\end{remark}

\section{Log canonical centers}

We briefly recall the definitions of log canonical singularities and log canonical centers.  Please see \cite[Section 10.4.B]{LazarsfeldPositivity2}, \cite[Definition 1.3]{KawamataFujita}, or \cite{KollarMori} for additional background and applications.

We assume the following notation for the remainder of the section.
\begin{notation}
\label{NotationCharZeroSings}
Let $X$ be a normal variety defined over an algebraically closed field of characteristic zero.  Suppose that $\Delta$ is an effective $\bQ$-divisor on $X$ such that $K_X + \Delta$ is $\bQ$-Cartier.  We assume that $\pi : \tld X \rightarrow X$ is a log resolution of the pair $(X, \Delta)$ (see \cite[Notation 0.4]{KollarMori} or \cite[Theorem 4.1.3]{LazarsfeldPositivity1} for a definition of log resolutions) and that $\pi_* K_{\tld X} = K_X$.  Write
\begin{equation}
\label{EqnDiscrepancy}
K_{\tld X} - \pi^* (K_X + \Delta) = \sum a_i E_i
\end{equation}
where the $E_i$ are prime divisors on $\tld X$.
Note that the $E_i$ that appear on the right side are all exceptional except for those that correspond to components of the strict transform of $\Delta$.
\end{notation}

\begin{definition}
A pair $(X, \Delta)$ is said to be \emph{log canonical} if for some (equivalently any) log resolution $\pi$ as in Notation \ref{NotationCharZeroSings}, we have that the $a_i$ that appear in Equation \ref{EqnDiscrepancy} all satisfy $a_i \geq -1$.
\end{definition}

\begin{definition} \cite{KawamataFujita}
A reduced irreducible subscheme $Z \subseteq X$ is said to be a \emph{log canonical center of $(X, \Delta)$} if there exists a log resolution $\pi : \tld X \rightarrow X$ and a divisor $E_i$ on $\tld X$ (as in Notation \ref{NotationCharZeroSings}) such that $\pi(E_i) = Z$ and such that the associated $a_i \leq -1$.
\end{definition}

\begin{remark}
A log canonical center is sometimes also called a ``center of log canonicity.''
\end{remark}

We will also need some results about seminormality.

\begin{definition} \cite{SwanSeminormality} \cite{GrecoTraversoSeminormal}
Suppose that $R$ is a reduced excellent ring and that $S \supseteq R$ is a reduced $R$-algebra which is finite as an $R$-module.  We say that the extension $i : R \subseteq S$ is \emph{subintegral} if the following two conditions are satisfied.
\begin{itemize}
\item[(1)]  $i$ induces a bijection on spectra, $\Spec S \rightarrow \Spec R$.
\item[(2)]  $i$ induces an isomorphism of residue fields over every (not necessarily closed) point of $\Spec R$.
\end{itemize}
\end{definition}

\begin{remark}
In \cite{GrecoTraversoSeminormal}, subintegral extensions are called quasi-isomorphisms.
\end{remark}

\begin{definition} \cite{SwanSeminormality} \cite{GrecoTraversoSeminormal}
Suppose that $R$ is a reduced excellent ring.  We say that $R$ is \emph{seminormal} if every subintegral extension $R \subseteq S$ is an isomorphism.
\end{definition}

\begin{remark}
In \cite{GrecoTraversoSeminormal}, the authors call $R$ seminormal if there is no proper subintegral extension $S \supseteq R$ such that $S$ is contained in the integral closure of $R$ (in its total field of fractions).  However, it follows from \cite[Corollary 3.4]{SwanSeminormality} that the above definition is equivalent.
\end{remark}

We next recall some facts about the log canonical centers of a log canonical pair.

\begin{theorem} \cite{AmbroSeminormalLocus}, \cite[Theorem 3.46]{FujinoIntroductionToLMMPForLCBook}, \cite{AmbroBasicPropertiesOfLCCenters}
\label{ThmPropertiesOfLCCenters}
Suppose that $(X, \Delta)$ is log canonical.  Then:
\begin{itemize}
\item[(a)]  The number of log canonical centers of $(X, \Delta)$ is finite.
\item[(b)]  Any intersection of two log canonical centers is a union of log canonical centers.
\item[(c)]  Any union of log canonical centers is seminormal.
\end{itemize}
\end{theorem}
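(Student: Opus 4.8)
The plan is to reduce all three assertions to a single well-chosen log resolution together with the standard vanishing and connectedness machinery of the minimal model program. Fix a log resolution $\pi : \tld X \to X$ as in Notation \ref{NotationCharZeroSings} and let $E := \sum_{a_i = -1} E_i$ be the reduced divisor consisting of the log canonical places visible on $\tld X$; since $(X, \Delta)$ is log canonical this is precisely $\sum_{a_i \leq -1} E_i$, and after blowing up further we may assume $E$ has simple normal crossings with smooth strata --- the irreducible components of the intersections $E_{i_1} \cap \cdots \cap E_{i_k}$. The key step is to prove that the log canonical centers of $(X, \Delta)$ are exactly the irreducible components of the images $\pi(W)$ as $W$ ranges over the strata of $E$. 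The inclusion $\supseteq$ is a local computation on simple normal crossing pairs: blowing up a stratum of a simple normal crossing pair whose boundary has coefficients $\leq 1$ produces an exceptional divisor of discrepancy $-1$. For $\subseteq$, given a divisor $E'$ with $a(E'; X, \Delta) \leq -1$ extracted on some other resolution, pass to a common resolution dominating both models; since the discrepancy depends only on the underlying valuation, and since over the simple normal crossing pair $(\tld X, -\sum a_i E_i)$ the valuations of discrepancy $-1$ are exactly those whose center on $\tld X$ is a stratum of $E$ (again a toric-style local computation), the center of $E'$ on $\tld X$ is such a stratum, so $\pi(E')$ is the image of one. Part (a) is now immediate, since $E$ has only finitely many strata.

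For part (b) I would invoke Kawamata's theorem that through every point $x$ lying on some log canonical center there is a \emph{unique minimal} log canonical center $W_x$; this is where the genuine input enters, as it rests on the Koll\'ar--Shokurov connectedness principle together with Kawamata--Viehweg vanishing, applied to $\pi$ and the $\bQ$-divisor $-\sum a_i E_i$ (whose negative is numerically $\pi$-trivial, hence $\pi$-nef). Granting this, let $Z_1, Z_2$ be log canonical centers, let $V$ be an irreducible component of $Z_1 \cap Z_2$, and let $x$ be the generic point of $V$. Since $x \in Z_1$, the minimal center $W_x$ exists; as $Z_1$ and $Z_2$ are both log canonical centers through $x$, minimality gives $W_x \subseteq Z_1 \cap Z_2$, while $x \in W_x$ forces $V = \overline{\{x\}} \subseteq W_x$. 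Since $V$ is an irreducible component of $Z_1 \cap Z_2$, we conclude $V = W_x$. Thus every component of $Z_1 \cap Z_2$ is a log canonical center, which is (b).

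For part (c), let $W$ be any union of log canonical centers with its reduced structure, and let $E_W \subseteq E$ be the reduced union of those strata of $E$ that map into $W$ (a closed, reduced subscheme of $E$ with simple normal crossing support, being a finite union of closed strata). The cleanest route is to observe that $W$ is Du Bois --- this is part of the circle of results around ``log canonical singularities are Du Bois'' (Koll\'ar--Kov\'acs, Fujino) --- and then to quote the elementary fact that a Du Bois scheme is seminormal. Alternatively, one argues classically: an appropriate Kawamata--Viehweg-type vanishing theorem makes $R^1\pi_*$ of a suitable twist of $\O_{\tld X}(-E_W)$ vanish, and the associated long exact sequence identifies $\O_W$ with $\pi_*$ of the structure sheaf of $E_W$, twisted by an effective $\pi$-exceptional divisor that does not affect the pushforward; since $E_W$, being simple normal crossing, is seminormal, and since seminormality survives this pushforward (using (b) and the normality of minimal log canonical centers from Kawamata's subadjunction, so that $W$ is built from its centers by gluing along reduced loci), $W$ is seminormal.

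The main obstacle is entirely contained in the ingredients quoted above: the Koll\'ar--Shokurov connectedness principle and the uniqueness of the minimal log canonical center for part (b), and the Kawamata--Viehweg vanishing --- equivalently, the Du Bois property of log canonical centers --- for part (c). By contrast the reduction to a fixed simple normal crossing resolution and the combinatorial identification of log canonical centers with images of strata, which underlie all three parts, are routine once the local discrepancy computation on simple normal crossing pairs is set up.
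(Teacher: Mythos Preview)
The paper does not prove this theorem at all: it is stated with citations to Ambro and Fujino and then used as a black box in the proof of Theorem~\ref{ThmBoundOnLCCenters}. So there is no ``paper's own proof'' to compare against; your proposal is an attempt to supply what the authors deliberately outsourced to the literature.

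As a sketch of the arguments in the cited references, your outline is broadly on target. The reduction of (a) to the finiteness of strata on a fixed snc log resolution, together with the discrepancy computation identifying the centers of all $-1$-valuations with strata, is the standard route. For (b), invoking the unique minimal log canonical center through a point (Kawamata, via Koll\'ar--Shokurov connectedness) and then arguing that every irreducible component of $Z_1 \cap Z_2$ coincides with the minimal center through its generic point is exactly how this is done. For (c), the two alternatives you mention---either quoting that unions of log canonical centers are Du Bois (hence seminormal), or pushing forward the structure sheaf of the relevant snc stratified set using a vanishing theorem---are both legitimate, the latter being closer to Ambro's original approach.

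That said, as written part (c) is more of a wish list than a proof. The sentence beginning ``an appropriate Kawamata--Viehweg-type vanishing theorem makes $R^1\pi_*$ of a suitable twist of $\O_{\tld X}(-E_W)$ vanish'' is doing all the work and none of the details: Kawamata--Viehweg in its usual form needs a klt-type hypothesis, which fails here, so one really needs the Ambro/Fujino machinery (quasi-log structures, or the injectivity/torsion-freeness theorems for lc pairs) to get the required surjectivity $\pi_* \O_{E_W} \twoheadrightarrow \O_W$. Likewise, the parenthetical about ``gluing along reduced loci'' using normality of minimal centers is not how seminormality is actually concluded; once you know $\O_W \cong \pi_* \O_{E_W}$ (or the analogous statement), seminormality of $W$ follows directly from seminormality of $E_W$ without any further gluing argument. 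If you want a self-contained proof rather than a pointer, those are the places to tighten.
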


Our goal in this section is to prove that the ideals associated to log canonical centers form a intersection compatible system.  %First we remind the reader that gluing along closed subschemes is possible.
%
%\begin{lemma}
%Suppose that $A$ and $B$ are rings that both map surjectively onto a ring $R$ by maps $\phi_A$ and $\phi_B$ respectively.  Let $C = \ker(\xymatrix{A \oplus B \ar[r]^-{\phi_A - \phi_B} R})$.  Note $C$ can also be viewed as the pullback of a diagram of rings.  Then $\Spec C$ is the gluing of $\Spec A$ to $\Spec B$ along the common closed subscheme $\Spec R$.  In particular:
%\begin{itemize}
%\item[(i)]  The points (closed or not) of $\Spec C$ are in bijective correspondence with the points of $(\Spec A \coprod \Spec B) / \sim$ where two points of $(\Spec A \coprod \Spec B)$ are identified if and only if they are the image of the same point in $\Spec R$.
%\item[(ii)]  $\Spec A$ and $\Spec B$ are both canonically identified with closed subschemes of $\Spec C$.  In particular, the residue fields of points of $\Spec C$ map isomorphically to the residue fields of $\Spec B$ and $\Spec C$.
%\end{itemize}
%\end{lemma}
%
%However, to do that we need the following lemmas, which may be known to experts.
We first need the following lemma, which may be known to experts.

\begin{lemma}
\label{LemmaIPlusJRadical}
Suppose that $I$ and $J$ are radical ideals in an excellent ring $R$ such that $R/(I \cap J)$ is seminormal.  Then $I + J$ is a radical ideal.
\end{lemma}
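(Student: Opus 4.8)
The plan is to reduce to a gluing computation and then invoke seminormality directly. First I would replace $R$ by $R/(I\cap J)$. This ring is seminormal (in particular reduced) by hypothesis, it is again excellent, the images of $I$ and $J$ are radical with intersection $(0)$, and since $I+J\supseteq I\cap J$ the ideal $I+J$ is radical in $R$ if and only if its image is radical in $R/(I\cap J)$. So from now on I may assume $I\cap J=(0)$ and $R$ seminormal; write $K=I+J$. The standard exact sequence
\[
0 \longrightarrow R \longrightarrow R/I \oplus R/J \longrightarrow R/K \longrightarrow 0, \qquad r \mapsto (\bar r,\bar r),\quad (\bar a,\bar b)\mapsto \overline{a-b},
\]
identifies $R$ with the fiber product $\{(\bar a,\bar b)\in R/I\oplus R/J : a-b\in K\}$ inside $S := R/I\oplus R/J$. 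Note $S$ is reduced (as $I$ and $J$ are radical) and module-finite over $R$.

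Next I would set $R' := \{(\bar a,\bar b)\in S : a-b \in \sqrt K\}$. This is a subring of $S$ with $R\subseteq R'\subseteq S$; it is reduced, and module-finite over $R$ since $R$ is Noetherian. The crucial claim is that the finite extension $R\subseteq R'$ is \emph{subintegral} in the sense of the definition above. Granting this, seminormality of $R$ forces $R=R'$ as subrings of $S$, and then comparing the two descriptions on the elements $(\bar a,\bar 0)$ shows $\sqrt K \subseteq K$, i.e.\ $I+J$ is radical, which is the assertion.

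To establish subintegrality I would exploit that $R'$ is the top corner of a conductor (Milnor) square whose other corners are $R/I$, $R/J$, and $R/\sqrt K$, the two maps $R/I\to R/\sqrt K$ and $R/J\to R/\sqrt K$ being the canonical surjections. Concretely, $\Spec S = V(I)\sqcup V(J)$, and I would check that: (i) a point $\mathfrak p \in V(I)\cap V(J) = V(I+J)$ has exactly two preimages in $S$ and these restrict to the same prime of $R'$, while every point of $\Spec R$ outside $V(I)\cap V(J)$ has a unique preimage in $S$; since $V(I)\cup V(J) = V(I\cap J) = \Spec R$, this yields a bijection $\Spec R' \to \Spec R$; and (ii) for a prime $\mathfrak q$ of $R'$ lying over $\mathfrak p$, picking a prime $\mathfrak P$ of $S$ over $\mathfrak q$ in, say, the $R/I$-factor (one of $V(I),V(J)$ contains $\mathfrak p$), the composite $R \to R'/\mathfrak q \hookrightarrow S/\mathfrak P \cong R/\mathfrak p$ is the quotient map, whence $R/\mathfrak p \cong R'/\mathfrak q$ and in particular the residue field extension is trivial. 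Alternatively, one can cite Ferrand's theorem that $\Spec R'$ is the pushout $\Spec(R/I)\sqcup_{\Spec(R/\sqrt K)}\Spec(R/J)$, which maps homeomorphically onto $V(I)\cup V(J)=\Spec R$ with the residue fields unchanged.

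I expect the main obstacle to be precisely this subintegrality verification — the bookkeeping that $R\subseteq R'$ is a bijection on spectra with trivial residue field extensions — and within it the case analysis at points of $V(I)\cap V(J)$, where one genuinely uses that the gluing is along the \emph{reduced} subscheme $V(\sqrt{I+J})$ rather than along $V(I+J)$; this is exactly the point where the seminormality hypothesis does its work. The remaining ingredients — the reduction to $I\cap J=(0)$, the reducedness and finiteness of $R'$, and the final deduction of $\sqrt{I+J}\subseteq I+J$ from $R=R'$ — are routine.
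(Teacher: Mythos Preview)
Your proposal is correct and follows essentially the same approach as the paper's own proof. You first pass to $R/(I\cap J)$ and then build the fiber-product ring $R'=\ker\bigl(R/I\oplus R/J\to R/\sqrt{I+J}\bigr)$, whereas the paper keeps $R$ and compares the two short exact sequences for $R/(I\cap J)$ and for $K:=R'$ directly; but the substance---showing the inclusion $R/(I\cap J)\hookrightarrow R'$ is subintegral via the gluing description of $\Spec R'$, and then invoking seminormality to conclude $I+J=\sqrt{I+J}$---is identical.
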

\begin{proof}
Set $K$ to be the kernel of that map
\[
 \xymatrix@R=6pt{R/I \oplus R/J \ar[r]^-{-} & R/\sqrt{I+J}\\
 (a + I, b+J) \ar@{|->}[r] & (a - b) + \sqrt{I + J}
}
\]
Note that $K$ is both an $R$-algebra and a finite $R$-submodule of $R/I \oplus R/J$.  $K$ is also reduced since $R/I$ and $R/J$ are reduced.  Geometrically, $\Spec K$ is the gluing of $\Spec R/I$ to $\Spec R/J$ along the common closed subscheme $\Spec R/\sqrt{I+J}$.  We have the following diagram of short exact sequences.
\[
\xymatrix{
0 \ar[r] & R/(I \cap J) \ar@{^{(}->}[d]_{\phi}  \ar[r]^\beta & R/I \oplus R/J \ar[d]^{\sim}  \ar[r]^-{-} & R/(I + J) \ar@{->>}[d]^{\psi} \ar[r] & 0\\
0 \ar[r] & K \ar[r] & R/I \oplus R/J \ar[r]^-{-} & R/\sqrt{I+J} \ar[r] & 0
}
\]
First, note that $\phi : \xymatrix{R/(I \cap J) \ar@{^{(}->}[r] & K}$ is automatically finite since $\beta$ is finite.  Additionally,
we claim that $\phi$ is a subintegral extension.  To see this, note $\Spec K$ is composed exactly of points of $\left( (\Spec R/I) \coprod (\Spec R/J) \right) / \sim$ where a pair of points are identified if they are both the image of the same point from $\Spec R/\sqrt{I + J}$.  This gluing operation does not change the residue fields at these points, which implies that $\phi$ is subintegral.  For additional background on gluing schemes along closed subschemes; see for example \cite{FerrandConducteur} or \cite{SchwedeGluing}.

Since $R/(I \cap J)$ is seminormal, it follows that $\phi$ is an isomorphism.  Therefore, $\psi$ is also an isomorphism and $I+ J = \sqrt{I + J}$ as desired.
\end{proof}

\begin{theorem}
\label{ThmBoundOnLCCenters}
Suppose that $(X, \Delta)$ is a log canonical pair and that $x \in X$ is a point with embedding dimension $n$.  Then the number of $d$-dimensional log canonical centers of $(X, \Delta)$ which contain $x$ is less than or equal to ${n \choose d}$.
\end{theorem}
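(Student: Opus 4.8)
The plan is to apply Theorem~\ref{ThmTechnicalResult} with $R = \O_{X,x}$, which is excellent because it is essentially of finite type over a field, and with $\sQ$ the set of prime ideals $Q \subseteq R$ for which $V(Q) \subseteq \Spec R$ is the germ at $x$ of a log canonical center of $(X,\Delta)$ --- equivalently, the primes cutting out the log canonical centers through $x$. This is a set of prime ideals, hence a pseudo-prime system (and it is finite by Theorem~\ref{ThmPropertiesOfLCCenters}(a), although we will not need this). A $d$-dimensional log canonical center through the (closed) point $x$ corresponds to a prime $Q \in \sQ$ with $\dim R/Q = d$, so the inequality $e(R,\sQ,d) \le \binom{n}{d}$ supplied by Theorem~\ref{ThmTechnicalResult} is exactly the assertion of the theorem. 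Everything therefore reduces to showing that $\sQ$ is an intersection compatible system.

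To that end we must check that
\[
\sI = \left\{ \bigcap_{Q \in \alpha} Q \ \middle|\ \alpha \text{ a finite subset of } \sQ \right\}
\]
is closed under sums; by induction it suffices to treat the sum of two elements. So take $I_1 = \bigcap_{Q \in \alpha_1} Q$ and $I_2 = \bigcap_{Q \in \alpha_2} Q$. First, $I_1 \cap I_2 = \bigcap_{Q \in \alpha_1 \cup \alpha_2} Q$ lies in $\sI$, and $R/(I_1 \cap I_2)$ is a localization at $x$ of the coordinate ring of a union of log canonical centers, hence seminormal by Theorem~\ref{ThmPropertiesOfLCCenters}(c) together with the fact that seminormality passes to localizations. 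Since $I_1$ and $I_2$ are radical, Lemma~\ref{LemmaIPlusJRadical} now tells us that $I_1 + I_2$ is radical.

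It remains to see that $V(I_1 + I_2)$ is a finite union of the subschemes $V(Q)$ with $Q \in \sQ$. As a set $V(I_1+I_2) = V(I_1) \cap V(I_2)$, which is the germ at $x$ of the intersection inside $X$ of two unions of log canonical centers; distributing this intersection over the irreducible pieces and applying Theorem~\ref{ThmPropertiesOfLCCenters}(b) repeatedly exhibits it as a union of log canonical centers, each of which passes through $x$. Hence every minimal prime of $I_1 + I_2$ lies in $\sQ$, and since $I_1 + I_2$ is radical it equals the intersection of those primes --- that is, $I_1 + I_2 \in \sI$. This establishes intersection compatibility, and the theorem follows from Theorem~\ref{ThmTechnicalResult}.

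The step I expect to be the main obstacle is precisely the passage from ``$V(I_1)\cap V(I_2)$ is a union of log canonical centers'' to ``$I_1+I_2$ is the radical ideal defining that union.'' Scheme-theoretic sums of radical ideals are badly behaved in general --- the non-example $(\langle x\rangle\cap\langle y\rangle) + \langle x+y\rangle = \langle x^2, x+y\rangle$ shows the sum need not even be reduced --- so this is exactly where Lemma~\ref{LemmaIPlusJRadical} and the seminormality input of Theorem~\ref{ThmPropertiesOfLCCenters}(c) do the real work; once radicality is in hand, parts (a) and (b) of that theorem make the remaining combinatorial bookkeeping routine. A secondary point is to make sure seminormality and excellence behave correctly under localization at $x$, and to match the dimension of a center as a subvariety of $X$ with the dimension of the corresponding quotient of $\O_{X,x}$.
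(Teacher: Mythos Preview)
Your proof is correct and follows essentially the same route as the paper's: set up $\sQ$ in $\O_{X,x}$, use Theorem~\ref{ThmPropertiesOfLCCenters}(b) to identify the minimal primes of $I_1+I_2$ as log canonical centers, use Theorem~\ref{ThmPropertiesOfLCCenters}(c) together with Lemma~\ref{LemmaIPlusJRadical} to force $I_1+I_2$ to be radical, and then invoke Theorem~\ref{ThmTechnicalResult}. If anything you are slightly more careful than the paper in noting that seminormality must be checked after localizing at $x$ (and that it is preserved under localization), and in spelling out the dimension-matching between centers in $X$ and quotients of $\O_{X,x}$.
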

\begin{proof}
We need to show that the set
\[
\sQ = \{ Q \in \Spec \O_{X, x} : \text{ $Q$ is the defining ideal of a log canonical center} \}
\]
is an intersection compatible system.  Suppose that $I$ and $J$ are each finite intersections of $Q \in \sQ$.  We will show that then $I+J$ is also a finite intersection of $Q \in \sQ$.  It follows from Theorem \ref{ThmPropertiesOfLCCenters}(b) that $\sqrt{I+J}$ is a finite intersection of elements of $\sQ$.  However, $\Spec R/(I \cap J)$ is a union of log canonical centers, and so it is seminormal by Theorem \ref{ThmPropertiesOfLCCenters}(c).  Therefore $I + J = \sqrt{I + J}$ is an intersection of elements of $\sQ$ by Lemma \ref{LemmaIPlusJRadical}.  Induction then implies that $\sQ$ is an intersection compatible system.
\end{proof}

\begin{remark}
\label{logcalabiyau}
 Suppose that $X$ is a normal projective variety over an algebraically closed field and that $(X, \Delta)$ is a log Calabi-Yau pair (that is, $\Delta$ is an effective $\bQ$-divisor, $(X, \Delta)$ is log canonical and $K_X + \Delta$ is $\bQ$-linearly equivalent to $0$).  For a very ample line bundle $\sL$ corresponding to a projectively normal embedding, we construct the associated section ring $S = \oplus_{i \geq 0} H^0(X, \sL^i)$.  On $Y = \Spec S$, there is an associated $\bQ$-divisor $\Delta_{Y}$ corresponding to $\Delta$.  One can show that the pair $(Y, \Delta_Y)$ is log canonical; see for example \cite[Proposition 4.38]{FujinoIntroductionToLMMPForLCBook} or \cite[Proposition 5.4(2)]{SchwedeSmithLogFanoVsGloballyFRegular}.  Furthermore, to each $d$-dimensional log canonical center of $(X, \Delta)$, there is an associated $(d+1)$-dimensional log canonical center of $(Y, \Delta_Y)$.  Thus, by Theorem \ref{ThmBoundOnLCCenters}, $(X, \Delta)$ has at most ${n \choose d+1}$ $d$-dimensional log canonical centers, where $n = \dim H^0(X, \sL)$.  This global bound is the analog of Theorem \ref{ThmCompatiblySplitProjective}.
\end{remark}

%\bibliographystyle{skalpha}
%\bibliography{NumberCompatSplit}

\providecommand{\bysame}{\leavevmode\hbox to3em{\hrulefill}\thinspace}
\providecommand{\MR}{\relax\ifhmode\unskip\space\fi MR}
% \MRhref is called by the amsart/book/proc definition of \MR.
\providecommand{\MRhref}[2]{%
  \href{http://www.ams.org/mathscinet-getitem?mr=#1}{#2}
}
\providecommand{\href}[2]{#2}

\end{document}